\newtheorem{theorem}{Theorem}[section]
\newtheorem{lemma}[theorem]{Lemma}
\newtheorem{remark}[theorem]{Remark}
\theoremstyle{definition}
\numberwithin{equation}{section}  
\numberwithin{equation}{section}
\def\N{\mathbb N}
\def\n{\nonumber}
\def\d{\displaystyle}
\def\v{\tilde v}
\title[Moving-boundary 1-D compressible Euler equations  in vacuum]
{Well-posedness in smooth function spaces for the moving-boundary 
 1-D compressible Euler equations in physical vacuum}
\author[D. Coutand]{Daniel Coutand}
\author[S. Shkoller]{Steve Shkoller}
\address{\scshape{canpde}, Maxwell Institute for Mathematical Sciences and department of Mathematics, Heriot-Watt University, Edinburgh, EH14 4AS, UK}
\address{Department of Mathematics, University of California, Davis, CA 95616, USA}
\subjclass{35L65, 35L70, 35L80, 35Q35, 35R35, 76B03}
\keywords{compressible Euler equations,  gas dynamics, free boundary problems, physical vacuum, characteristic hyperbolic systems,
degenerate hyperbolic systems, systems of conservation laws}
\email{D.Coutand@ma.hw.ac.uk}
\email{shkoller@math.ucdavis.edu}
\begin{document}

\begin{abstract}
The free-boundary compressible 1-D Euler equations with moving {\it physical
vacuum} boundary are  a system of hyperbolic conservation laws which are
both {\it characteristic} and  {\it degenerate}.   The {\it physical vacuum singularity}
(or rate-of-degeneracy)  requires the sound speed $c= \gamma \rho^{ \gamma -1}$ to scale as the square-root of the distance to the
vacuum boundary, and has attracted a great deal of attention in recent years \cite{IMA2009}.  We establish the existence of unique solutions to this system on a short time-interval, which are smooth (in Sobolev
spaces) all the way to the moving boundary.   The proof is founded on a
new higher-order Hardy-type inequality in conjunction with an approximation of the Euler equations
consisting of a particular degenerate parabolic regularization.  Our regular solutions can be viewed as
{\it degenerate viscosity solutions}.
\end{abstract}

\maketitle
{\small 
\tableofcontents}

\section{Introduction}
\label{sec_introduction}

\subsection{Compressible Euler equations and the physical vacuum boundary} 
This paper is concerned with the evolving vacuum state in a compressible gas flow.
For $0 \le t \le T$,
the evolution of a one-dimensional  compressible gas  moving inside of a dynamic vacuum  boundary is modeled by the one-phase compressible Euler equations:
\begin{subequations}
  \label{ceuler}
\begin{alignat}{2}
\rho\left[u_t+ uu_x\right] + p(\rho)_x &=0  &&\text{in} \ \ I (t) \,, \label{ceuler.a}\\
\rho_t+ (\rho u)_x&=0 
&&\text{in} \ \ I (t) \,, \label{ceuler.b}\\
p &= 0 \ \ &&\text{on} \ \ \Gamma(t) \,, \label{ceuler.c}\\
\mathcal{V} (\Gamma(t))& = u  &&\ \ \label{ceuler.d}\\
(\rho,u)   &= (\rho_0,u_0 ) \ \  &&\text{on} \ \  I(0) \,, \label{ceuler.e}\\
   I (0) &= I =\{0<x<1\} \,.  && \label{ceuler.f}
\end{alignat}
\end{subequations}
The open, bounded interval
 $I (t) \subset \mathbb{R}  $ denotes the changing domain occupied by the gas,  $\Gamma(t):= \partial I (t)$ denotes
 the moving vacuum boundary, $ \mathcal{V} (\Gamma(t))$ denotes the
 velocity of $\Gamma(t)$.
  The scalar field $u $ denotes the Eulerian velocity
field, $p$ denotes the pressure function, and $\rho$ denotes the density of the gas.
The equation of state $p(\rho)$ is given by
\begin{equation}\label{eos}
p(x,t)= C_\gamma\, \rho(x,t)^\gamma\ \  \text{ for } \ \  \gamma> 1 ,
\end{equation} 
where $C_\gamma $ is the adiabatic constant which we set to one, and
$$
\rho>0 \ \text{ in } \ I (t) \ \ \ \text{ and } \ \ \ \rho=0 \ \text{ on } \Gamma(t) \,.
$$
Equation (\ref{ceuler.a}) is the conservation of momentum; (\ref{ceuler.b}) is the conservation of mass; the boundary
condition (\ref{ceuler.c}) states that  the pressure (and hence density)  must vanish along the vacuum boundary; (\ref{ceuler.d}) states that the
vacuum boundary is moving with  the fluid velocity, and (\ref{ceuler.e})-(\ref{ceuler.f}) are the 
initial conditions for the density, velocity, and domain.  Using the equation of state (\ref{eos}),
 (\ref{ceuler.a}) is written as
\begin{alignat}{2}
\rho [u_t+ uu_x]+  (\rho^{\gamma})_x &=0 \ \ \   &&\text{in} \ \ I (t) \,. \tag{ \ref{ceuler.a}'} 
\end{alignat}

With the sound speed given by $c^2(x,t)=  \gamma \rho^{\gamma-1}(x,t)$,  and with $c_0 = c(x,0)$, the condition
\begin{equation}\label{phys_vac}
0 < \left|\frac{ \partial c_0^2}{ \partial x}\right| < \infty  \text{ on } \Gamma
\end{equation} 
defines a {\it physical vacuum} boundary  (or singularity) (see  \cite{Liu1996}, \cite{LiYa1997}, \cite{LiYa2000},
 \cite{XuYa2005}). Since $ \rho_0 >0$ in $ I$, (\ref{phys_vac}) implies that for some positive constant $C$ and $x\in  I$ near the vacuum boundary $\Gamma$,
\begin{equation}\label{degen}
\rho_0^{\gamma-1}(x) \ge  C \text{dist}(x, \Gamma) \,.
\end{equation} 
Equivalently,  the physical vacuum condition (\ref{degen}) implies that for some $ \alpha >0$,
\begin{equation}\label{degen1}
\left|\frac{ \partial \rho_0^{\gamma-1}}{\partial x}(x)\right|\ge 1 \text{ for any $x$ satisfying $d(x,\partial I)\le\alpha$} \,,
\end{equation} 
and for a constant $C_ \alpha $, depending on $ \alpha $, 
\begin{equation}\label{degen2}
\rho_0^{\gamma-1}(x)\ge C_\alpha>0 \text{ for any $x$ such that $d(x,\partial I)\ge\alpha$} \,.
\end{equation}

\vspace{.05 in}

Because of condition (\ref{degen}), the compressible Euler system (\ref{ceuler}) is a {\it degenerate} and  {\it characteristic} hyperbolic
system to which standard methods of symmetric hyperbolic conservation laws cannot be applied in standard Sobolev spaces.   In
\cite{CoLiSh2009}, we established a priori estimates for the multi-D compressible Euler equations with physical vacuum boundary.

The main result of this paper is the construction of unique solutions in the 1-D case, which are smooth all the way to the moving vacuum boundary on
a (short) time-interval $[0,T]$, where $T$ depends on the initial data.  
  We combine
 the methodology of our a priori estimates \cite{CoLiSh2009}, with a particular degenerate parabolic regularization of Euler, which
follows our methodology in \cite{CoSh2006, CoSh2007}, as well as a  new  higher-order Hardy-type inequality which permits the
construction of solutions to this degenerate parabolic regularization.  As we describe below in Section \ref{history}, our solutions
can be thought of as {\it degenerate viscosity solutions}.  The  multi-D existence theory is treated in \cite{CoSh2009}.

\subsection{Fixing the domain and the Lagrangian variables on the reference interval $ I$}   We transform the system (\ref{ceuler}) into
Lagrangian variables.
We let $\eta(x,t)$ denote the ``position'' of the gas particle $x$ at time $t$.  Thus,
\begin{equation}
\nonumber
\begin{array}{c}
\partial_t \eta = u \circ \eta $ for $ t>0 $ and $
\eta(x,0)=x
\end{array}
\end{equation}
where $\circ $ denotes composition so that
$[u \circ \eta] (x,t):= u(\eta(x,t),t)\,.$  
We set 
\begin{align*}
v &= u \circ \eta   \text{ (Lagrangian velocity)},  \\
f&= \rho \circ \eta   \text{ (Lagrangian density)}. 
\end{align*}

The Lagrangian version of equations (\ref{ceuler.a})-(\ref{ceuler.b}) can be written on
the fixed reference domain $ I$ as
\begin{subequations}
\label{ceuler00}
\begin{alignat}{2}
f  v_t + ( f^ \gamma)_x &=0 \ \ && \text{ in } I  \times (0,T] \,, \label{ceuler00.a} \\
f _t + f v_x/\eta_x  &=0 \ \ && \text{ in } I  \times (0,T] \,,\label{ceuler00.b}  \\
f  &=0 \ \ && \text{ in } I  \times (0,T] \,,\label{ceuler00.c}  \\
(f,v,\eta)  &=(\rho_0, u_0, e) \ \  \ \ && \text{ in } I  \times \{t=0\} \,, \label{ceuler00.d} 
\end{alignat}
\end{subequations}
where $e(x)=x$ denotes the identity map on $I $.

It follows from solving the equation (\ref{ceuler00.b}) that
\begin{equation}\label{J}
f =\rho \circ \eta = \rho_0/\eta_x,
\end{equation}
so that the initial density function $\rho_0$ can be viewed as a parameter in the Euler equations.   Let $\Gamma:= \partial I $ denote
the initial vacuum boundary;  we write the compressible Euler equations (\ref{ceuler00}) as
\begin{subequations}
\label{ceuler0}
\begin{alignat}{2}
\rho_0 v_t +  (\rho_0^ \gamma /\eta_x^\gamma)_x &=0 \ \ && \text{ in } I  \times (0,T] \,, \label{ceuler0.a} \\
(\eta, v)  &=( e, u_0) \ \  \ \ && \text{ in } I  \times \{t=0\} \,, \label{ceuler0.b} \\
\rho_0^{\gamma-1}& = 0 \ \ &&\text{ on }  \Gamma \,, \label{ceuler0.c}  
\end{alignat}
\end{subequations}
with $ \rho _0^{ \gamma -1}(x) \ge C \operatorname{dist}( x, \Gamma ) $ for $x \in I$ near $\Gamma$.

\subsection{Setting $\gamma=2$}We will begin our analysis of  (\ref{ceuler0}) by considering the
case that $\gamma=2$, in which case,  we seek solutions $\eta$ to the following system:
\begin{subequations}
  \label{ce0}
\begin{alignat}{2}
\rho_0  v_t + (\rho_0^2/\eta_x^2)_x&=0  &&\text{in} \ \  I \times (0,T] 
\,, \label{ce0.a}\\
(\eta,v)&= (e,u_0 ) \ \ \ &&\text{on} \ \  I \times \{t=0\} \,, \label{ce0.b}\\
\rho_0& = 0 \ \ &&\text{ on }  \Gamma \,, \label{ce0.c}
\end{alignat}
\end{subequations}
with $ \rho _0(x) \ge C \operatorname{dist}( x, \Gamma ) $ for $x \in I$ near $\Gamma$.

The equation
(\ref{ce0.a}) is equivalent to 
\begin{equation}
v_t + 2 \eta_x ^{-1}  (\rho_0 \eta_x^{-1} )_x=0 \label{ce_vor} \,,
\end{equation} 
and (\ref{ce_vor}) can be written as
 
\begin{equation}
v_t +\rho_0 ( \eta_x^{-2})_x + 2(\rho_0)_x \eta_x^{-2}  =0 \label{ce_elliptic} \,.
\end{equation} 
Because of the degeneracy caused by $\rho_0 =0$ on $\Gamma$, all three equivalent forms of the compressible
Euler equations are crucially used in our analysis.  The equation (\ref{ce0.a}) is used for energy estimates, while (\ref{ce_vor}) 
 and (\ref{ce_elliptic}) are used for additional elliptic-type estimates that rely on our higher-order Hardy-type inequality.

\subsection{The reference domain $ I$}\label{subsec_domain}
As we have already noted above, 
 the initial domain $ I \subset \mathbb{R} $ at time $t=0$ is given by
$$
 I = (0,1)\,,
$$
and the initial boundary points are given by $\Gamma = \partial I = \{0,1\}$.

\subsection{The higher-order energy function for the case $\gamma=2$}
We will consider the following higher-order energy function:
\begin{align}
E(t,v) & =\sum_{s=0}^4 \|\partial_t^sv(t,\cdot)\|^2_{H^{2-\frac{s}{2}}(I)} + \sum_{s=0}^2 \|\rho_0 \partial_t^{2s} v(t,\cdot)\|^2_{H^{3-s}(I)} \n \\
& \qquad \qquad +\|\sqrt{\rho_0}\partial_t \partial_x^{2} v(t,\cdot)\|^2_{L^2(I)} +\|\sqrt{\rho_0}\partial_t^{3}\partial_x v(t,\cdot)\|^2_{L^2(I)}\,.
\label{E}
\end{align}

We define the polynomial function $M_0$ by
\begin{equation}\label{M0}
M_0 = P(E(0,v)) \,.
\end{equation}

 \subsection{The Main Result}   
\begin{theorem} [Existence and uniqueness for the case $\gamma=2$] \label{theorem_main}
 Given initial data $(u_0, \rho_0)$ such that $M_0< \infty $ and the physical vacuum condition (\ref{degen}) holds for $\rho_0$, there exists a  solution to (\ref{ce0}) (and hence (\ref{ceuler})) on $[0,T]$ for $T>0$ taken
 sufficiently small, such that
 $$
 \sup_{t \in [0,T]} E(t) \le 2M_0 \,.
 $$
 Moreover if the initial data satisfies
 \begin{equation}\label{uniquedata}
 \sum_{s=0}^3 \|\partial_t^sv(0,\cdot)\|^2_{{3-s}} + \sum_{s=0}^3 \|\rho_0 \partial_t^{2s} v(0,\cdot)\|^2_{{4-s}} < \infty \,,
 \end{equation} 
 then the solution is unique.
\end{theorem}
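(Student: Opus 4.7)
The plan is to obtain solutions to (\ref{ce0}) as limits of a carefully chosen degenerate parabolic regularization. A standard vanishing-viscosity approach fails because one must preserve the physical-vacuum scaling $\rho_0(x)\sim\operatorname{dist}(x,\Gamma)$ all the way up to the boundary: any regularization that smooths away the boundary degeneracy will destroy the uniform energy estimates on $E(t,v)$. So I would introduce a parameter $\kappa>0$ and study an approximate equation of the form $\rho_0 v_t + (\rho_0^2/\eta_x^2)_x = \kappa\,\mathcal{L}(v)$, where $\mathcal{L}$ is a fourth-order parabolic operator adapted to the weight $\rho_0$, in the spirit of the viscous regularizations used in \cite{CoSh2006, CoSh2007}. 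For each fixed $\kappa>0$, existence for the regularized system is obtained via a contraction-mapping argument in a space controlled by $E(t,v)$; the crucial ingredient is the new higher-order Hardy-type inequality, which lets one invert $\mathcal{L}$ and close the linearized estimates in a neighborhood of $\Gamma$.

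Next I would derive $\kappa$-independent a priori estimates for the approximate solutions, exploiting all three equivalent forms (\ref{ce0.a}), (\ref{ce_vor}), (\ref{ce_elliptic}) of the momentum equation. Energy-type estimates come from (\ref{ce0.a}): differentiate in $t$ up to four times, test against the corresponding time derivatives of $v$, and integrate by parts. Since $\rho_0$ vanishes on $\Gamma$, the boundary contributions drop out and the degenerate weights $\rho_0$ and $\sqrt{\rho_0}$ in $E(t,v)$ appear naturally. To upgrade these to control over the purely spatial derivatives of $v$, I would treat (\ref{ce_elliptic}) as an elliptic equation: solving for $\rho_0(\eta_x^{-2})_x$ and iteratively applying the higher-order Hardy inequality converts $\rho_0$-weighted $H^{k+1}$ bounds on time derivatives into unweighted $H^{2-s/2}$ bounds, exactly matching the norms appearing in (\ref{E}). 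Combining these would yield an inequality of the form $E(t,v)\le M_0 + C\,T\,P(\sup_{[0,t]}E(\cdot,v))$ with $P$ polynomial and $C$ independent of $\kappa$, giving $\sup_{[0,T]}E(t,v)\le 2M_0$ on a short time interval.

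With uniform estimates in hand I would pass to the limit $\kappa\to 0$ by weak-$*$ compactness of $v^\kappa$ together with strong Aubin--Lions compactness of $\eta^\kappa$, obtaining a solution $\eta$ of (\ref{ce0}) satisfying the stated energy bound. The nonlinear coefficient $\rho_0^2/\eta_x^2$ is handled through strong convergence of $\eta_x^\kappa$ together with the lower bound $\eta_x^\kappa\ge 1/2$, which is preserved on the short time interval by the $H^2$-control on $v^\kappa$. For uniqueness, I would form the difference $\delta v = v^{(1)}-v^{(2)}$, $\delta\eta = \eta^{(1)}-\eta^{(2)}$ of two solutions, derive the linear degenerate equation satisfied by $(\delta v,\delta\eta)$, and run a lower-order weighted energy estimate leading to a Gronwall inequality. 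The hypothesis (\ref{uniquedata}) is precisely what is needed to control the coefficients of this difference equation in the norms that naturally appear.

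The main obstacle is closing the $\kappa$-uniform estimates: the interplay between the degenerate weights $\rho_0$, $\sqrt{\rho_0}$, the time derivatives in $E(t,v)$, and the elliptic gain from (\ref{ce_elliptic}) has to be calibrated so that the higher-order Hardy inequality is applied at exactly the right level at each step. Establishing well-posedness of the $\kappa$-approximate problem, while technically involved, is essentially routine once that Hardy inequality is in hand.
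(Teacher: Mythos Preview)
Your overall architecture matches the paper's: a degenerate parabolic regularization respecting the vacuum weight, $\kappa$-independent a priori estimates combining energy bounds with elliptic/Hardy gains, and passage to the limit. Several specifics, however, differ from the paper in ways that matter for closing the argument.

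First, the paper's artificial viscosity is the \emph{second-order} operator $\kappa[\rho_0^2 v_x]_x$, not a fourth-order one. The point is that this term carries exactly the same degenerate weight $\rho_0^2$ as the pressure term $(\rho_0^2/\eta_x^2)_x$. After time-differentiating $k+1$ times and isolating the leading pieces one obtains, schematically, $-2[\rho_0^2\partial_t^k v']' - \kappa[\rho_0^2\partial_t^{k+1} v']' = \text{r.h.s.}$, i.e.\ an equation of the form $f+\tfrac{\kappa}{2}f_t=g$ with $f=[\rho_0^2\partial_t^k v']'$. Lemma~\ref{kelliptic} then bounds $\|f\|_{L^\infty_t H^s_x}$ \emph{independently of $\kappa$} by $\|g\|_{L^\infty_t H^s_x}$. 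This is precisely the device that makes the elliptic/Hardy step uniform in $\kappa$; a fourth-order regularization would not produce this structure, and your proposal offers no substitute mechanism.

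Second, the top-order energy estimate is carried out on the \emph{fifth} time-differentiated problem, not the fourth. Testing $\partial_t^5$ of (\ref{approximate.a}) against $\partial_t^5 v$ yields control of $\|\sqrt{\rho_0}\,\partial_t^5 v\|_0$ and, through the exact time-derivative hidden in the pressure term, of $\|\rho_0\,\partial_t^4 v'\|_0$; the weighted embedding (\ref{w-embed}) then gives the unweighted bound $\|\partial_t^4 v\|_0$ needed in $E$. Stopping at four time derivatives produces only $\|\sqrt{\rho_0}\,\partial_t^4 v\|_0$ and $\|\rho_0\,\partial_t^3 v'\|_0$, which do not close the energy.

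Third, existence for the $\kappa$-problem is obtained by the Tychonoff fixed-point theorem (weak sequential continuity of $\bar v\mapsto v$ on a closed bounded convex set), not by contraction.

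Finally, uniqueness in the paper is not a low-order Gronwall argument: one repeats the full machinery (fifth time-differentiated energy estimate plus the elliptic/Hardy bootstrap) on the difference $\delta v$. The extra regularity (\ref{uniquedata}) is what allows the additional error terms---which now involve derivatives of both background flows $\eta,\psi$---to be absorbed.
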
 
\begin{remark} The case of arbitrary $\gamma >1$ is treated in Theorem \ref{thm_main2} below.
\end{remark} 
\begin{remark} Given the regularity provided by the energy function (\ref{E}), we see that the Lagrangian flow map
$\eta \in C([0,T], H^2(I))$.   In our estimates for the multi-D problem \cite{CoLiSh2009}, we showed that $\eta$ gains
regularity with respect to the velocity field $v$, and this fact is essential to control the geometry of the evolving free-surface.
This improved regularity for $\eta$ also holds in the 1-D setting, but it is not necessary for our estimates, as no
geometry is involved.
\end{remark} 
\begin{remark}
Given $u_0$ and $\rho_0$,  and using the fact that $\eta(x,0)=x$, the quantity $v_t|_{t=0}$ is computed using (\ref{ce0.a}):
$$
v_t|_{{t=0}} =- \left.\left({\frac{1}{\rho_0}} (\rho_0^2/\eta_x^2)_x \right)\right|_{t=0} =     - 2\frac{\partial \rho_0}{\partial x}  \,.
$$
Similarly, for all $k \in \mathbb{N}  $,
$$
\partial_t^k v|_{{t=0}} =- \left. \left( {\frac{1}{\rho_0}} (\rho_0^2/\eta_x^2)_x  \right)\right|_{t=0} \,,
$$
so that  each $\partial_t^k v|_{t=0}$ is a function of space-derivates of $u_0$ and $\rho_0$.
\end{remark} 

\begin{remark}Notice that our functional framework provides solutions which have optimal Sobolev regularity all the way
to the boundary.     Hence, in the physical case that $c \sim \sqrt{\text{dist}(\partial I)}$,  no singular behavior  occurs near
the vacuum boundary, even though both families of characteristics  cross, and in particular, meet tangentially to $\Gamma(t)$ at a point.
\end{remark} 

\begin{remark}  Because of the degeneracy of the density function $\rho_0$ at the initial boundary $ \partial I$, no compatibility
conditions are required of the initial data.
\end{remark}

\subsection{History of prior results for the compressible Euler equations with vacuum boundary}\label{history}
Some of the early developments in the theory of vacuum states for  compressible gas dynamics can be found in \cite{LiSm1980} and
\cite{Lin1987}.
We are aware of only a handful of previous theorems pertaining to the existence of solutions to the compressible and {\it undamped} Euler equations with moving  vacuum boundary.   Makino
 \cite{M1986} considered compactly supported initial data, and treated the compressible  Euler equations for a gas as being set on  $\mathbb{R}^3  \times (0,T]$.  With
 his methodology, it is not possible to track the location of the vacuum boundary (nor is it necessary); nevertheless, an existence theory was developed in
 this context, by a variable change that permitted the standard theory of symmetric hyperbolic systems to be employed.  Unfortunately, the constraints on the data are too severe to allow for the evolution of the physical vacuum boundary.
 
 In \cite{Li2005b}, Lindblad proved existence and uniqueness for the 3-D compressible Euler equations modeling a {\it liquid} rather than a gas.
 For a compressible liquid, the density $\rho\ge \lambda>0$ is assumed to be a strictly positive constant on the moving vacuum boundary $\Gamma(t)$ and is
 thus uniformly bounded below  by a positive constant.  As such, the compressible liquid provides a uniformly hyperbolic, but characteristic, system.  Lindblad used Lagrangian variables combined with Nash-Moser iteration to construct solutions.   More recently,  Trakhinin \cite{Tr2008}
provided an alternative proof for the existence of a compressible liquid, employing a solution strategy based on symmetric hyperbolic systems
combined with Nash-Moser iteration.  

In the presence of damping, and with mild singularity, some existence results of smooth
solutions are available, based on the adaptation of the theory of symmetric hyperbolic systems.
In \cite{LiYa1997}, a local existence theory was developed for the case that $c^ \alpha $ (with $0< \alpha \le 1$) is smooth across $\Gamma$,
using methods that are not applicable to the local existence theory
for the physical vacuum boundary. An existence theory for the small perturbation of a
planar wave was developed in \cite{XuYa2005}.  See also \cite{LiYa2000} and \cite{Yang2006}, for other features of the vacuum state problem.

The only existence theory for the physical vacuum boundary condition that we know of can be found in the recent paper by Jang and Masmoudi \cite{JaMa2009} for the 1-D compressible gas, wherein weighted Sobolev norms are used for the energy
estimates.  From these weighted norms, the regularity  of the solutions cannot be directly determined.  Letting $d$ denote the distance
function to the boundary $\partial I$, and letting $\| \cdot \|_0 $ denote
the $L^2(I)$-norm, an example of the type
of bound that is proved for the velocity field in \cite{JaMa2009} is the following:
\begin{align}
&\| d\, v\|_0^2 + \|d\, v_x\|_0^2 + \|d\, v_{xx} + 2 v_x\|_0^2  + \|d\, v_{xxx} + 2 v_{xx} - 2 d^{-1}\,  v_x\|_0^2 \n \\
&\qquad\qquad \qquad \qquad \qquad\qquad \qquad + \|d\, v_{xxxx} + 4 v_{xxx} - 4 d^{-1}\,  v_{xx}\|_0^2  < \infty  \,.
\label{jama}
\end{align} 
The problem with inferring the regularity of $v$ from this bound can already be seen at the level of an $H^1(I)$ estimate.   In particular,
the bound on the norm $\|d\, v_{xx} + 2 v_x\|_0^2$ only implies a bound on $\|d\, v_{xx}\|_0^2$ and $\|v_x\|_0^2$ if the integration by
parts on the cross-term,
$$
4\int_I  d\, v_{xx} \, v_x =  -2\int_I d_x \, |v_x|^2 \,,
$$
can be justified, which in turn requires having better regularity for $v_x$ than the a priori bounds provide.   Any methodology  which
seeks regularity in (unweighted) Sobolev spaces for solutions must contend with this type of issue.  We overcome this difficulty by 
constructing (sufficiently) smooth solutions to a degenerate parabolic regularization, and thus the sort of integration-by-parts difficulty
just described is overcome.   One can view our solutions as {\it degenerate viscosity solutions}.   The key to their construction is
our higher-order Hardy-type inequality that we provide below.

\section{Notation and Weighted Spaces}\label{notation}

\subsection{Differentiation and norms in the open interval $I$} 
 Throughout the paper the symbol $D$  will be used to 
 denote $\frac{\partial}{\partial x}$.

For integers $k\ge 0$,
we define the Sobolev space $H^k(I)$  to
be the completion of $C^\infty(I)$
in the norm
$$\|u\|_k := \left( \sum_{a\le k}\int_\Omega \left|   D^a u(x)
\right|^2 dx\right)^{1/2}\,.$$
For real numbers $s\ge 0$, the Sobolev spaces $H^s(I)$ and the norms $\| \cdot \|_s$ are defined by interpolation.
We use $H^1_0(I)$ to denote the subspace of $H^1(I)$ consisting of those functions $u(x)$ that vanish at $x=0$ and $x=1$.

\subsection{The embedding of a weighted Sobolev space} Using $d$ to denote the distance function to
the boundary $\Gamma$,  and letting $p=1$ or $2$,
the weighted Sobolev space  $H^1_{d^p}(I)$, with norm given by 
$\int_I d(x)^p (|F(x)|^2+| DF (x)|^2 )\, dx$ for any $F \in H^1_{d^p}(I)$, 
satisfies the following embedding:
$$H^1_{d^p}(I) \hookrightarrow   H^{1 - \frac{p}{2}}(I)\,,$$
so that there is a constant $C>0$ depending only on $I$ and $p$ such that
\begin{equation}\label{w-embed}
\|F\|_{1-p/2} ^2 \le C \int_I d(x)^p \bigl( |F(x)|^2 + \left| DF(x) \right|^2\bigr) \, dx\,.
\end{equation} 
See, for example,  Section 8.8 in Kufner \cite{Kufner1985}.

\section{A higher-order Hardy-type inequality}

We will make fundamental use of the following generalization of the well-known Hardy inequality to higher-order derivatives:

\begin{lemma}[Higher-order Hardy-type inequality]\label{Hardy}   Let $s\ge 1$ be a given integer, and suppose that
\begin{equation}\nonumber
u\in H^s(  I)\cap H^1_0(  I)\,. 
\end{equation} 
Then if $d$ denotes the distance fuction to $\partial I$, we have that $\d\frac{u}{d}\in H^{s-1}(  I)$ with 
\begin{equation}
\label{Hardys}
\d\left\|\frac{u}{d}\right\|_{s-1}\le C \|u\|_s.
\end{equation}
\end{lemma}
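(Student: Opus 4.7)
The plan is to localize at each endpoint with a partition of unity and reduce Lemma~\ref{Hardy} to a one-endpoint model problem, which I then solve using the scaling identity $w(x)/x = \int_0^1 w'(sx)\,ds$.

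\emph{Step 1: Localization.} Introduce a smooth partition of unity $1 = \chi_L + \chi_M + \chi_R$ on $I = (0,1)$, with $\chi_L$ supported in $[0, 1/2)$, $\chi_R$ in $(1/2, 1]$, and $\chi_M$ compactly supported in $(0,1)$. On the support of $\chi_M$, $d \ge c > 0$, so $\|\chi_M u / d\|_{s-1} \le C \|u\|_{s-1}$ follows from the product rule and bounded multiplication by $1/d$. On the supports of $\chi_L$ and $\chi_R$, the distance $d(x)$ coincides with $x$ and $1-x$ respectively, and the condition $u \in H^1_0(I)$ transfers to the cutoffs (e.g.\ $(\chi_L u)(0) = 0$). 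Hence it is enough to prove the following model statement: \emph{if $w \in H^s(0,a)$ with $w(0) = 0$, then $\|w/x\|_{H^{s-1}(0,a)} \le C \|w\|_{H^s(0,a)}$}; the estimate at $x=1$ is symmetric.

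\emph{Step 2: Rescaled integral representation.} Since $H^1(0,a) \hookrightarrow C^0$, the pointwise value $w(0)$ makes sense, so the fundamental theorem of calculus combined with the substitution $t = sx$ gives $w(x)/x = \int_0^1 w'(sx)\,ds$. Differentiating $k$ times under the integral (first for a smooth approximant $w_n$, see below) yields, for $0 \le k \le s-1$,
$$D^k\!\left(\frac{w}{x}\right)(x) = \int_0^1 s^k\, w^{(k+1)}(sx)\,ds .$$
Applying Minkowski's integral inequality and then the change of variables $y = sx$ (which produces a Jacobian factor $s^{-1/2}$ on the $L^2$ norm) gives
$$\left\| D^k(w/x)\right\|_{L^2(0,a)} \le \int_0^1 s^k\, \|w^{(k+1)}(s\,\cdot)\|_{L^2(0,a)}\,ds \le \|w^{(k+1)}\|_{L^2(0,a)} \int_0^1 s^{k-1/2}\,ds .$$
The last integral equals $1/(k+\frac{1}{2})$, which is finite for every integer $k \ge 0$; summing over $0 \le k \le s-1$ yields the desired bound. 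For $k=0$ this recovers precisely the classical Hardy inequality, so no separate base case is required.

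\emph{Main obstacle.} The only delicate point is justifying the differentiation under the integral when $w^{(s)}$ is merely an $L^2$ function. I would handle this by density: choose smooth approximants $w_n \in C^\infty([0,a])$ with $w_n(0) = 0$ and $w_n \to w$ in $H^s(0,a)$, apply the estimate of Step~2 to each $w_n$, then use the same estimate applied to $w_n - w_m$ to conclude that $(w_n/x)$ is Cauchy in $H^{s-1}(0,a)$; its limit is easily identified with $w/x$ (in the distributional sense, using $H^1 \hookrightarrow C^0$ on the quotient). Reassembly via the partition of unity then gives the full inequality on $I$.
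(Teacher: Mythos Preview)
Your proof is correct and takes a genuinely different route from the paper's.

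The paper argues by induction on $s$. For the inductive step it computes $D^s(u/d)$ via the Leibniz rule as $f/d^{s+1}$, then exploits a telescoping identity to show $Df = (-1)^s s!\, d^s D^{s+1}u$; integrating this and invoking the classical Hardy inequality on an auxiliary function yields $\|D^s(u/d)\|_0 \le C\|D^{s+1}u\|_0$, and the induction hypothesis handles the lower-order derivatives. Your approach bypasses the induction entirely: after localizing near one endpoint, the scaling identity $w(x)/x = \int_0^1 w'(sx)\,ds$ differentiates cleanly to $D^k(w/x) = \int_0^1 s^k w^{(k+1)}(sx)\,ds$, and Minkowski plus the change of variables $y=sx$ gives all derivative bounds simultaneously with explicit constants $(k+\tfrac12)^{-1}$. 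Your argument is shorter and more transparent (it is essentially Hardy's original averaging trick iterated), and it makes the constants explicit; the paper's computation, by contrast, produces the pointwise representation $D^s(u/d)(x) = (-1)^s s!\, x^{-s-1}\int_0^x y^s D^{s+1}u(y)\,dy$, which is a sharper structural statement even if it is not needed for the norm inequality. Both proofs localize to a neighborhood of each endpoint (you via a partition of unity, the paper via the cutoffs $\psi_1,\psi_2$), so there is no real difference there.
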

\begin{proof}
We use an induction argument.
The case $s=1$ is of course the classical Hardy inequality. Let us now assume that  the inequality (\ref{Hardys}) holds for a given $s\ge 1$, and suppose that
$$u\in H^{s+1}(  I)\cap H^1_0(  I)\,.$$ 
Using $D$ to denote $\frac{\partial}{\partial x}$, 
a simple computation shows that for $m \in \mathbb{N}  $,
\begin{equation}
\label{Hardy1}
D^m(\frac{u}{d})=\frac{f}{d^{m+1}},
\end{equation}
with $$f=\sum_{k=0}^m C_m^k D^{m-k}u\ k! (-1)^k d^{m-k}$$
for a constant $C_m^k$ depending on $k$ and $m$.
From the regularity of $u$, we see that $f\in H^1_0(  I)$. Next, with $D= \frac{\partial}{\partial x}$, we obtain the identity
\begin{align}
Df&=\sum_{k=0}^s C_s^k D^{s+1-k}u\ k! (-1)^k d^{s-k}+\sum_{k=0}^{s-1} C_s^k D^{s-k}u\ k! (-1)^k d^{s-k-1}(s-k)\n\\
&=D^{s+1}u\ s! (-1)^s d^s +\sum_{k=1}^s C_s^k D^{s+1-k}u\ k! (-1)^k d^{s-k}\n\\
& \qquad\qquad\qquad +\sum_{k=0}^{s-1} C_s^{k+1} D^{s-k}u\ (k+1)! (-1)^k d^{s-k-1}\n\\
&=D^{s+1}u\ s! (-1)^s d^s \,. \label{cs1}
\end{align}
Since $f\in H^1_0( I)$, we deduce from (\ref{cs1}) that for any $x\in (0,\frac{1}{2}]$, 
\begin{equation*}
f(x)=(-1)^s s!\ \int_0^x D^{s+1}u(y)\ y^s dy,
\end{equation*}
which by substitution in (\ref{Hardy1}) yields the identity
\begin{equation*}
\d D^s(\frac{u}{d})(x)=\frac{(-1)^s s!\ \int_0^x D^{s+1}u(y)\ y^s dy}{x^{s+1}},
\end{equation*}
which by a simple majoration provides the bound
\begin{equation*}
\d \bigl|D^s(\frac{u}{d})(x)\bigr|\le s!\ \frac{\psi_1(x)\int_0^x |D^{s+1}u(y)|\  dy}{d(x)},
\end{equation*}
where $\psi_1$ is the piecewise affine function equal to $1$ on $[0,\frac{1}{2}]$ and to $0$ on $[\frac{3}{4},1]$. Next, for any $x\in [\frac{1}{2},1)$, we obtain similarly that
\begin{equation*}
\d \bigl|D^s(\frac{u}{d})(x)\bigr|\le s!\ \frac{\psi_2(x)\int_x^1 |D^{s+1}u(y)|\  dy}{d(x)},
\end{equation*}
where $\psi_2$ is the piecewise affine function equal to $0$ on $[0,\frac{1}{4}]$ and to $1$ on $[\frac{1}{2},1]$.
so that for any $x\in I$:
\begin{equation}
\label{Hardy2}
\d \bigl|D^s(\frac{u}{d})(x)\bigr|\le s!\ \frac{\psi_1(x)\int_0^x |D^{s+1}u(y)|\  dy+\psi_2(x)\int_x^1 |D^{s+1}u(y)|\  dy}{d(x)}.
\end{equation}
Now, with $g=\psi_1(x)\int_0^x |D^{s+1}u(y)|\  dy+\psi_2(x)\int_x^1 |D^{s+1}u(y)|\  dy$, we notice that $g\in H_1^0( I)$, with $$\|g\|_1\le C\|D^{s+1}u\|_0.$$ Therefore, by the classical Hardy inequality, we infer from (\ref{Hardy2}) that
\begin{equation}
\label{Hardy3}
\bigl\|D^s(\frac{u}{d})\bigr\|_0\le C \|g\|_1\le C \|D^{s+1}u\|_0.
\end{equation}
Since we assumed in our induction process that our generalized Hardy inequality is true at  order $s$, we then have that
 $$\bigl\|\frac{u}{d}\bigr\|_{s-1}\le C \|u\|_s,$$ 
which, together with (\ref{Hardy3}), implies that
$$\bigl\|\frac{u}{d}\bigr\|_{s}\le  C \|u\|_{s+1},$$
and thus establishes the property at order $s+1$, and concludes the proof.
\end{proof}
 In order to obtain estimates independent of a regularization parameter $\kappa$ defined in Section \ref{statement},  we will also need the following Lemma, whose proof can be found in Lemma 1, Section 6 of \cite{CoSh2006}:
\begin{lemma}\label{kelliptic}
 Let $\kappa>0$ and $g\in L^\infty(0,T;H^s(I)))$ be given, and let $f\in H^1(0,T;H^s(I))$ be such that $$f+\kappa f_t=g\ \ \ \text{in}\ (0,T)\times I.$$ Then, $$\|f\|_{L^\infty(0,T;H^s(I))}\le C\, \max\{\|f(0)\|_s,\|g\|_{L^\infty(0,T;H^s(I))}\}.$$
\end{lemma}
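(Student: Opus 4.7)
The plan is to view the equation $f+\kappa f_t=g$ as a first-order linear ODE in time with values in the Banach space $H^s(I)$. Since the spatial variable plays no role in the differential operator, I would solve it explicitly by the integrating factor method: multiplying by $e^{t/\kappa}/\kappa$ gives $(e^{t/\kappa}f)_t=e^{t/\kappa}g/\kappa$, so
\begin{equation*}
f(t)=e^{-t/\kappa}\,f(0)+\frac{1}{\kappa}\int_0^t e^{-(t-\tau)/\kappa}\,g(\tau)\,d\tau.
\end{equation*}
The regularity assumptions $f\in H^1(0,T;H^s(I))$ and $g\in L^\infty(0,T;H^s(I))$ guarantee that this Duhamel representation holds as an equality in $H^s(I)$ for almost every $t\in[0,T]$.

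Next I would take the $H^s(I)$ norm and apply Minkowski's integral inequality to move the norm under the time integral, obtaining
\begin{equation*}
\|f(t)\|_s \le e^{-t/\kappa}\|f(0)\|_s + \frac{1}{\kappa}\int_0^t e^{-(t-\tau)/\kappa}\|g(\tau)\|_s\,d\tau.
\end{equation*}
Estimating $\|g(\tau)\|_s$ by $\|g\|_{L^\infty(0,T;H^s(I))}$ and computing the elementary integral $\frac{1}{\kappa}\int_0^t e^{-(t-\tau)/\kappa}d\tau=1-e^{-t/\kappa}$, I conclude
\begin{equation*}
\|f(t)\|_s \le e^{-t/\kappa}\|f(0)\|_s+(1-e^{-t/\kappa})\|g\|_{L^\infty(0,T;H^s(I))} \le \max\{\|f(0)\|_s,\|g\|_{L^\infty(0,T;H^s(I))}\},
\end{equation*}
since the right side is a convex combination. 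Taking the supremum over $t\in[0,T]$ yields the claim, in fact with constant $C=1$.

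There is no real obstacle here; the only subtle point is to verify that the Duhamel formula above is a legitimate identity in $H^s(I)$ rather than just a pointwise-in-$x$ statement, which follows from the fact that $t\mapsto f(t)$ and $t\mapsto g(t)$ are Bochner measurable into $H^s(I)$ under the given hypotheses, so the exponential integrating factor operates linearly on the Banach-space-valued trajectory. Once this is observed, the rest is the standard Grönwall-type bookkeeping for a dissipative scalar ODE.
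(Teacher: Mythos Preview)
Your argument is correct; the integrating-factor representation together with the convex-combination bound is exactly the right mechanism, and your remark on Bochner measurability is the only point requiring care. The paper itself does not give a proof of this lemma but cites Lemma~1, Section~6 of \cite{CoSh2006}, where the same elementary ODE computation is carried out, so your approach coincides with the intended one.
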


\section{A degenerate parabolic approximation of the compressible Euler equations in vacuum}
\label{statement}

For $ \kappa >0$, we consider the following nonlinear degenerate parabolic approximation of  the compressible Euler
system (\ref{ce0}):
\begin{subequations}
  \label{approximate}
\begin{alignat}{2}
\rho_0  v_t + (\rho_0^2/\eta_x^2)_x&= \kappa [\rho_0^2 v_x]_x &&\text{in} \ \  I \times (0,T] 
\,, \label{approximate.a}\\
(\eta,v)&= (e,u_0 ) \ \ \ &&\text{on} \ \  I \times \{t=0\} \,, \label{approximate.b}\\
\rho_0& = 0 \ \ &&\text{ on }  \Gamma \,, \label{approximate.c}
\end{alignat}
\end{subequations}
with $ \rho _0(x) \ge C \operatorname{dist}( x, \Gamma ) $ for $x \in I$ near $\Gamma$.

We will first obtain the existence of a solution to (\ref{approximate}) on a short time interval $[0,T_ \kappa ]$ (with $T_ \kappa $ depending a priori on $\kappa$). We will then perform energy estimates on this solution that will show that the time of existence, in fact,  does not depend on $\kappa$, and moreover that our a priori estimates for this sequence of solutions  is also independently of 
$\kappa$. 
The existence of a solution to the compressible Euler equations (\ref{ce0}) is obtained as  the weak limit as $ \kappa \to 0$  of the sequence of solutions  to (\ref{approximate}).

\section{Solving the parabolic $\kappa$-problem (\ref{approximate}) by a fixed-point method}

For notational convenience, we will write 
$$ \eta ' = \frac{\partial \eta}{ \partial x}$$
and
similarly for other functions.  

\subsection{Assumption on initial data}  
Given $u_0$ and $\rho_0$,  and using the fact that $\eta(x,0)=x$, the quantity $v_t|_{t=0}$ for the degenerate parabolic $ \kappa $-problem is computed using (\ref{approximate.a}):
$$
v_t|_{{t=0}} = \left.\left(\frac{ \kappa }{\rho_0}  [\rho_0^2 v']'      - {\frac{1}{\rho_0}} (\rho_0^2/\eta'^2)'  \right)\right|_{t=0} = 
\left(\frac{ \kappa }{\rho_0}  [\rho_0^2 u_0']'      - 2 \rho_0' \right) \,.
$$
Similarly, for all $k \in \mathbb{N}  $,
$$
\partial_t^k v|_{{t=0}} = \left. \frac{\partial^k}{\partial t^k}\left(\frac{ \kappa }{\rho_0}  [\rho_0^2 v']'      - {\frac{1}{\rho_0}} (\rho_0^2/\eta'^2)'  \right)\right|_{t=0} \,.
$$
These formulae make it clear that  each $\partial_t^k v|_{t=0}$ is a function of space-derivates of $u_0$ and $\rho_0$.

For the purposes of constructing solutions to the degenerate parabolic $ \kappa $-problem (\ref{approximate}), it is convenient  to
smooth the initial data.   By standard convolution methods, we assume that $u_0 \in C^ \infty (I)$ and that $\rho_0 \in C^ \infty (I)$
and satisfies (\ref{degen1}) and (\ref{degen2}).    

\subsection{Functional framework for the fixed-point scheme and some notational conventions}

For $T>0$, we shall denote by $\mathcal{X}_T$ the following Hilbert space:
\begin{align}
\mathcal{X}_T=\{&v\in L^2(0,T;H^2(I))|\ \partial_t^4 v\in L^2(0,T;H^1(I)), \ \rho_0 \partial_t^4 v\in L^2(0,T;H^2(I)) \\
& \ \  \partial_t^3 v\in L^2(0,T;H^2(I)), \ \rho_0 \partial_t^3 v\in L^2(0,T;H^3(I)) \}\,,
\end{align}
endowed with its natural Hilbert norm:
\begin{align*}
\|v\|_{\mathcal{X}_T}^2 & = \|  v\|_{L^2(0,T;H^2(I))}^2   +  \|  \partial_t^4 v\| _{L^2(0,T;H^1(I))}^2 + \| \rho_0  \partial_t^4 v\| _{L^2(0,T;H^2(I))}^2 \\
& \qquad 
 + \|  \partial_t^3 v\| _{L^2(0,T;H^2(I))}^2 + \| \rho_0  \partial_t^3 v\| _{L^2(0,T;H^3(I))}^2  \,.
\end{align*} 

For $M>0$  given sufficiently large, we define the following closed, bounded, convex subset of $\mathcal{X}_T$:
\begin{align}\label{ctm}
\mathcal{C}_T(M)=\{ v\in \mathcal{X}_T \  : \ \|v\|_{\mathcal{X}_T}\le M\},
\end{align}
which is indeed non empty if $M$ is large enough.   Henceforth, we assume that $T>0$ is given such that independently
of the choice of $v \in \mathcal{C} _T(M)$,  
$$
\eta(x,t) =x + \int_0^t v(x,s)ds
$$
is injective for $t \in [0,T]$, and that ${\frac{1}{2}} \le \eta'(x,t) \le {\frac{3}{2}} $  for $t\in [0,T]$ and $x \in \overline I$.  This can
be achieved by taking $T>0$ sufficiently small:  with $e(x)=x$, notice that
$$
\| \eta'( \cdot ,t) -e\|_1 = \| \int_0^t v'(\cdot ,s) ds\|_1 \le \sqrt{T}M \,.
$$
The space $ \mathcal{X} _T$  will be appropriate for our fixed-point methodology to prove existence of a solution to our $\kappa$-regularized parabolic problem (\ref{approximate}).

Finally, we  define the polynomial function $ \mathcal{N} _0$  of norms of the  initial data as follows:
\begin{equation}\label{N0}
\mathcal{N} _0 = P( \| \sqrt{\rho_0} \partial_t^5 v'(0)\|_0,  \| \partial_t^4 v(0)\|_1,  \| \partial_t^3 v(0)\|_2 , \| \partial_t^2 v(0)\|_2 , 
\| \partial_t v(0)\|_2, \|  u_0\|_2) \,.
\end{equation} 

\subsection{A theorem for the existence and uniqueness of solutions to the parabolic $\kappa $-problem}
 We will make use of the Tychonoff fixed-point Theorem in our fixed-point procedure
(see, for example, \cite{Deimling1985}). Recall that this states that for a reflexive separable
Banach space $\mathcal{X}_T$, and $\mathcal{C}_T(M) \subset \mathcal{X}_T$ a closed, convex, bounded subset, if $F : \mathcal{C}_T(M) \to \mathcal{C}_T(M)$ is
weakly sequentially continuous into $\mathcal{X}_T$, then $F$ has at least one fixed point.

\begin{theorem}[Solutions to the parabolic $ \kappa $-problem]\label{thm_ksoln}
Given our smooth data, for $T $ taken sufficiently small,  there exists a unique solution $v \in \mathcal{X}_T$ to the degenerate
parabolic $ \kappa $-problem (\ref{approximate}).
\end{theorem}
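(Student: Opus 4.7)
The plan is to realize the unique solution of (\ref{approximate}) as a fixed point of a solution map on $\mathcal{C}_T(M)$ and to invoke Tychonoff's fixed-point theorem. Given $\bar v\in\mathcal{C}_T(M)$, I set $\bar\eta(x,t)=x+\int_0^t\bar v(x,s)\,ds$, which satisfies $\bar\eta_x\in[\tfrac12,\tfrac32]$ by the choice of $T$ following (\ref{ctm}), and then define $F(\bar v):=v$, where $v$ solves the linear degenerate parabolic problem
\begin{equation*}
\rho_0\,v_t-\kappa(\rho_0^2\,v_x)_x=-\bigl(\rho_0^2/\bar\eta_x^2\bigr)_x \quad \text{in } I\times (0,T],\qquad v(\cdot,0)=u_0.
\end{equation*}
The existence of $v$ at the basic level can be obtained by a Galerkin approximation in the $L^2_{\rho_0}$-eigenbasis of the operator $u\mapsto-(\rho_0^2 u_x)_x/\rho_0$, or equivalently by replacing $\rho_0^2$ by $\rho_0^2+\varepsilon$, solving the resulting uniformly parabolic problem by classical theory, and passing to the limit $\varepsilon\to 0$ with the help of the basic energy identity $\tfrac12\tfrac{d}{dt}\|\sqrt{\rho_0}\,v\|_0^2+\kappa\|\rho_0\,v_x\|_0^2=\int_I(\rho_0^2/\bar\eta_x^2)\,v_x\,dx$.

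To keep $F(\bar v)$ in $\mathcal{C}_T(M)$, I differentiate the linearized equation $k=1,\ldots,4$ times in $t$ and test the resulting equations against $\partial_t^k v$. Each such test gives
\begin{equation*}
\tfrac12\tfrac{d}{dt}\|\sqrt{\rho_0}\,\partial_t^k v\|_0^2+\kappa\|\rho_0\,\partial_t^k v_x\|_0^2=\int_I\partial_t^k\bigl(\rho_0^2/\bar\eta_x^2\bigr)\,\partial_t^k v_x\,dx,
\end{equation*}
and the forcing $\partial_t^k(\rho_0^2/\bar\eta_x^2)$ is a polynomial in $\partial_t^j\bar v$ ($j\le k$) and in $\bar\eta_x^{-1}$, multiplied by one of $\rho_0^2$, $\rho_0\rho_0'$, or $(\rho_0')^2$. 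The $\rho_0^2$- and $\rho_0\rho_0'$-pieces are absorbed by the parabolic dissipation $\kappa\|\rho_0\,\partial_t^k v_x\|_0^2$ after a Young inequality, while the $(\rho_0')^2$-pieces, which carry no boundary-vanishing factor, are controlled via the higher-order Hardy inequality of Lemma \ref{Hardy} applied to quantities of the form $\rho_0\cdot(\text{derivatives of }\bar v)$, together with the weighted embedding (\ref{w-embed}). To recover the remaining spatial norms in $\|v\|_{\mathcal{X}_T}$, I rewrite the equation as the $x$-elliptic relation $\kappa(\rho_0^2 v_x)_x=\rho_0 v_t+(\rho_0^2/\bar\eta_x^2)_x$, apply $D^j$, and use Lemma \ref{Hardy} to divide out the $\rho_0$-weights at a one-derivative cost; Lemma \ref{kelliptic} is invoked on the $\kappa\partial_t$-structure of auxiliary variables such as $\rho_0^2 v_x$ to produce the $L^\infty_tH^s_x$-type bounds appearing in $\mathcal{X}_T$.

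The main obstacle of the argument lies precisely in this energy bookkeeping: every one of the five weighted time-derivative norms in $\|\cdot\|_{\mathcal{X}_T}$ must be closed with a constant of the form $P(\mathcal{N}_0)+T^{1/2}P(M)$, and the mixed spatial-temporal orders of $\partial_t^k(\rho_0^2/\bar\eta_x^2)_x$ must be precisely matched to the derivative loss incurred in each use of Hardy. The $\kappa$-dependence of the constants is acceptable at this stage, as uniformity in $\kappa$ is addressed only after the fixed-point step.

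Once the estimate $\|F(\bar v)\|_{\mathcal{X}_T}\le P(\mathcal{N}_0)+T^{1/2}P(M)$ is in place, I choose $T=T_\kappa$ so that the right-hand side is $\le M$, which gives $F:\mathcal{C}_T(M)\to\mathcal{C}_T(M)$. Weak sequential continuity follows from the linear structure of the problem defining $F(\bar v)$: if $\bar v_n\rightharpoonup\bar v$ in $\mathcal{X}_T$, then the Aubin--Lions lemma yields $\bar\eta_n\to\bar\eta$ strongly in $C([0,T];H^1(I))$, so that the forcing $(\rho_0^2/\bar\eta_{n,x}^2)_x$ converges strongly, and the uniform $\mathcal{X}_T$-bound on $F(\bar v_n)$ allows passage to the weak limit in the linear equation and identification of the limit with $F(\bar v)$. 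Tychonoff's theorem then produces a fixed point $v\in\mathcal{C}_T(M)$ of $F$, which solves (\ref{approximate}). Uniqueness of the $\kappa$-solution is obtained by subtracting two solutions, testing the linear equation for $w=v_1-v_2$ against $w$, exploiting the sign of $\kappa\|\rho_0 w_x\|_0^2$ to absorb the nonlinear remainders, and closing with Grönwall's inequality.
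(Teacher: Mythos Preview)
Your overall architecture matches the paper's: linearize around $\bar v\in\mathcal{C}_T(M)$, solve the linear degenerate parabolic problem, show the solution map is a weakly sequentially continuous self-map of $\mathcal{C}_T(M)$, and apply Tychonoff. Uniqueness by an energy argument on the difference is also what the paper does. The paper's proof, however, rests on a device you have not supplied and which your two suggested constructions do not readily replace.

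The paper does \emph{not} run a Galerkin scheme directly on $v$. Instead it passes to the new unknown $X=\rho_0 v'$, which automatically lies in $H^1_0(I)$ because $\rho_0$ vanishes on $\partial I$, and in fact to $Y=\partial_t^5 X=\rho_0\,\partial_t^5 v'$, solving (\ref{divt}) by a Galerkin approximation in the Dirichlet Laplacian eigenbasis $(e_n)$ of $H^1_0(I)$. This change of variables is what makes everything close: since each $e_n\in H^2\cap H^1_0$, Lemma~\ref{Hardy} gives $e_n/\rho_0\in H^1$, so the weighted pairings in the weak formulation make sense; and the $H^1_0$ membership of $Y$ and $Z=\int_0^t Y$ is precisely the hypothesis needed to invoke Lemma~\ref{Hardy} again in the ``improved space regularity'' step, which upgrades $Z\in H^1$ to $\rho_0\,\partial_t^4\check v\in H^3$ and hence $\partial_t^4 v\in H^2$. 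Your proposal to use the eigenbasis of $u\mapsto -(\rho_0^2 u_x)_x/\rho_0$ would require a nontrivial singular Sturm--Liouville theory (completeness, eigenfunction regularity) that is not available off the shelf; your $\varepsilon$-regularization $\rho_0^2\mapsto\rho_0^2+\varepsilon$ turns the problem into a uniformly parabolic one that \emph{demands} boundary conditions on $v$, whereas the degenerate problem has none, and you do not say which BC to impose or why they disappear in the limit.

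A second, related point: the paper works at the \emph{fifth} time-differentiated level and then time-integrates once to reach the $\partial_t^4$ quantities in $\mathcal{X}_T$ with improved spatial regularity. Your scheme, testing $\partial_t^k v$ only for $k\le 4$, yields control on $\kappa\int_0^T\|\rho_0\,\partial_t^4 v_x\|_0^2$, which carries a $\rho_0$ weight; the $\mathcal{X}_T$ norm requires the \emph{unweighted} $\|\partial_t^4 v\|_{L^2_t H^1_x}$. The mechanism for stripping that weight is exactly the $X\in H^1_0$/Hardy argument above, and it is not clear how your elliptic rewriting $\kappa(\rho_0^2 v_x)_x=\rho_0 v_t+(\rho_0^2/\bar\eta_x^2)_x$ accomplishes this without it.
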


\subsection{Linearizing the degenerate parabolic $ \kappa $-problem}
Given $\bar v \in \mathcal{C}_T(M)$, and defining $\bar \eta (x,t) =x + \int_0^t \bar v(x,\tau)d\tau$, we consider   the linear equation for $v$:
\begin{equation}
\label{linear1}
\rho_0 v_t+ \left[\frac{\rho_0^2}{\bar\eta'^2}\right]'-\kappa [\rho_0^2 v']'=0 \,.
\end{equation}
We will prove the following:
\begin{enumerate}
\item $v$ is a unique solution to (\ref{linear1});
\item $v \in \mathcal{C}_T(M)$ for $T$ taken sufficiently small;
\item the map $\bar v \mapsto v: \mathcal{C}_T(M) \to \mathcal{C}_T(M)$, and is sequentially weakly continuous in $\mathcal{X}_T$.
\end{enumerate}
The solution to our parabolic $ \kappa $-problem (\ref{approximate}) will then be obtained as a fixed-point of the map $\bar v \mapsto v$
in $\mathcal{X}_T$ via the Tychonoff fixed-point Theorem.

In order to use our higher-order  Hardy-type inequality, Lemma \ref{Hardy}, it will be convenient to introduce the new variable 
$$X=\rho_0 v'$$
 which then belongs to $H_0^1( I)$. By a simple computation, we see that (\ref{linear1}) is equivalent to
\begin{equation*}
v_t'+ \left[\frac{2}{\bar\eta'}\Bigl(\frac{\rho_0}{\bar\eta'}\Bigr)'\right]'-\kappa \left[\frac{1}{\rho_0}\Bigl[\rho_0^2 v'\Bigr]'\right]'=0,
\end{equation*}
and hence that
\begin{subequations}
\label{div}
\begin{alignat}{2}
\frac{X_t}{\rho_0}-\kappa \left[\frac{1}{\rho_0} (\rho_0\ X)'\right]'&=-\left[\frac{2}{\bar\eta'}\Bigl(\frac{\rho_0}{\bar\eta'}\Bigr)'\right]' && \  \text{ in }\ [0,T]\times  I,\\
X&=0 && \ \text{ on }\ \ [0,T]\times\partial I,\\
X|_{t=0}&=\rho_0 u_0' && \ \text{ on } \  I.
\end{alignat}
\end{subequations}
We shall therefore solve the degenerate linear parabolic problem (\ref{div}) with Dirichlet boundary conditions, which (as we will prove) will surprising admit a solution with arbitrarily high space regularity (depending on the regularity of the right-hand side and the initial data, of course), and not just an $H_0^1(I)$-type  weak solution. After we obtain the solution  $X$, we will then easily find our solution $v$ to (\ref{linear1}).

In order to construct our fixed-point, we will need to obtain estimates for $X$ (and hence $v$)
with high space regularity; in particular,  we will need to study the   fifth  time-differentiated problem.  For this purpose, it is convenient to define
the  new variable 
\begin{equation}\label{defineY}
Y=\partial_t^5 X =\rho_0 \partial_t^5 v'\,,
\end{equation} 
and consider the following equation for $Y$:
\begin{subequations}
\label{divt}
\begin{alignat}{2}
\frac{Y_{t}}{\rho_0}-\kappa \left[\frac{1}{\rho_0} (\rho_0\ Y)'\right]'&=-\partial_t^5\left[\frac{2}{\bar\eta'}\Bigl(\frac{\rho_0}{\bar\eta'}\Bigr)'\right]' && \ \hbox{in}\ [0,T]\times  I, \label{divt.a}\\
Y&=0&& \ \hbox{on}\ [0,T]\times\partial I, \label{divt.b} \\
Y|_{t=0}&=Y_{\text{init}} && \ \hbox{in}\  I \,,  \label{divt.c}
\end{alignat}
\end{subequations}
where $Y_{\text{init}} = \rho_0 \partial_t^5 v'|_{t=0}$.

\subsection{Existence of a weak solution to the linear problem (\ref{divt}) by a Galerkin scheme}

Let $(e_n)_{n\in\N}$ denote a Hilbert basis of $H_0^1( I)$, with each $e_n$ being of class $H^2( I)$. Such a choice of basis is indeed possible as we can take for instance the eigenfunctions of the Laplace operator on $I$ with vanishing Dirichlet boundary conditions. We then define the Galerkin approximation at order $n\ge 1$ of (\ref{div}) as being under the form $Y_n=\sum_{i=0}^n \lambda_i^n(t) e_i$ such that:
\begin{subequations}
\label{divn}
\begin{align}
\forall k\in\{0,...,k\},\ \left(\frac{{Y_n}_t}{\rho_0}  + \kappa  (\rho_0 Y_n)' \,,  \frac{ e_k' }{\rho_0} \right)_{L^2( I)} &=\left(\partial_t^5\bigr[\frac{2}{\bar\eta'}\bigl[\frac{\rho_0}{\bar\eta'}\bigr]'\bigl] \ ,\ e_k'\right)_{L^2( I)}\ \hbox{in}\ [0,T],\\
\lambda_i^n(0)&=(Y_{\text{init}},e_i)_{L^2( I)}.
\end{align}
\end{subequations}
Since each $e_i$ is in $H^2( I)\cap H_0^1( I)$, we have by our high-order Hardy-type inequality (\ref{Hardy}) that 
$$\frac{e_i}{\rho_0}\in H^1( I) \,;$$
therefore, each integral written in (\ref{divn}) is well-defined. 
Furthermore, as the $e_i$ are linearly independent, so are the $\frac{e_i}{\sqrt{\rho_0}}$ and therefore the determinant of the matrix 
$$\Bigl(\bigl(\frac{e_i}{\sqrt{\rho_0}},\frac{e_j}{\sqrt{\rho_0}}\bigr)_{L^2( I)}\Bigr)_{(i,j)\in\N_n=\{1,...,n\}}$$ 
is nonzero. This implies that our finite-dimensional Galerkin approximation (\ref{divn}) is a well-defined first-order differential system of order $n+1$, which  therefore has a solution on a time interval $[0,T_n]$, where $T_n$ a priori depends on the dimension $n$ of
the Galerkin approximation. 
In order to prove that $T_n=T$, with $T$ independent of $n$, we notice that since $Y_n$ is a linear combination of the $e_i$ ($i\in \N_n$), we have that
\begin{equation}
\label{g1}
\left(\frac{{Y_n}_t}{\rho_0}-\kappa [\frac{1}{\rho_0} (\rho_0\ Y_n)']',\ Y_n\right)_{L^2( I)} =\left(\partial_t^5\bigr[\frac{2}{\bar\eta'}\bigl[\frac{\rho_0}{\bar\eta'}\bigr]'\bigl]\ ,\ Y_n'\right)_{L^2( I)}.
\end{equation}
Since  ${Y_n}\in H_0^1( I)$ and $[\frac{1}{\rho_0} (\rho_0\ Y_n)']'\in H^1( I)$, integration by parts yields 
\begin{align}
- \int_I [\frac{1}{\rho_0} (\rho_0\ Y_n)']'\ Y_n=\int_I [\frac{1}{\rho_0} (\rho_0\ Y_n)']Y_n'
 =\int_I {Y_n'}^2+ \int_I \rho_0'\frac{Y_n}{\rho_0} Y_n'.
 \label{g2}
\end{align}
Next, using our higher-order Hardy-type inequality, we see that $\frac{Y_n}{\rho_0}\in H^1( I)$, and thus
$$
\int_I \rho_0'\frac{Y_n}{\rho_0} Y_n' =-\int_I \rho_0'\frac{1}{\rho_0}  Y_n' Y_n+\int_I \frac{\rho_0'^2}{\rho_0^2} {Y_n^2}-\int_I \rho_0''\frac{Y_n^2}{\rho_0},
 $$
which implies that
$$
\int_I \rho_0'\frac{Y_n}{\rho_0} Y_n' =\frac{1}{2} \int_I \frac{\rho_0'^2}{\rho_0^2} {Y_n^2}-\frac{1}{2} \int_I \rho_0''\frac{Y_n^2}{\rho_0}.
 $$
Substitution of  this identity into (\ref{g1}) and (\ref{g2}) yields
\begin{equation*}
\frac{1}{2} \bigl[\frac{d}{dt}\int_I\frac{Y_n^2}{\rho_0}- \kappa \int_I \rho_0''\frac{Y_n^2}{\rho_0}\bigr] +\kappa\int_I Y_n'^2+\frac{1}{2}\kappa\int_I \frac{\rho_0'^2}{\rho_0^2} Y_n^2=
 -\int_I \partial_t^5\bigl[\frac{2}{\bar\eta'}\bigl[\frac{\rho_0}{\bar\eta'}\bigr]'\bigr] Y_n'\ ,
\end{equation*}
which shows that (since our given $\bar v\in \mathcal{C}_T(M)$):
\begin{equation*}
 \frac{d}{dt}\int_I\frac{Y_n^2}{\rho_0}- \kappa \|\rho_0''\|_{L^\infty} \int_I \frac{Y_n^2}{\rho_0}+{\kappa}\int_I Y_n'^2+\kappa\int_I \rho_0'^2\frac{Y_n^2}{\rho_0^2}\le C_\kappa,
\end{equation*}
for a constant $C_ \kappa $ depending on $ \kappa $.
Consequently, $T_n=T$ with $T$ independent of $n$, and
\begin{equation}
 \sup_{[0,T]} \int_I \frac{Y_n^2}{\rho_0} +{\kappa}\int_0^T\int_I Y_n'^2\le C_\kappa T + C \mathcal{N} _0 \,,
\end{equation}
where $ \mathcal{N} _0$ is defined in (\ref{N0}).
Thus, there exists a  subsequence of $(Y_n)$ which converges weakly to some $Y\in L^2(0,T;H_0^1( I))$, which satisfies
\begin{equation}
\label{g3}
 \sup_{[0,T]}\int_I \frac{Y^2}{\rho_0} +{\kappa}\int_0^T\int_I Y'^2\le C_\kappa T + C \mathcal{N} _0 \,.
\end{equation}
With (\ref{defineY}), we see that
\begin{equation}\label{pt5v}
\|\rho_0 \partial_t^5v' \|_{L^2(0,T; H^1(I))} \le C_\kappa T + C \mathcal{N} _0 \,.
\end{equation} 
Furthermore, it can also be shown using standard arguments that $Y$ is a solution of (\ref{divt}) (where (\ref{divt.a}) is satisfied almost everywhere in $[0,T]\times I$ and holds in a variational sense for all test functions in $L^2(0,T;H_0^1( I))$), and that
\begin{equation}\nonumber
\frac{Y_t}{\rho_0} \in L^2(0,T; H ^{-1} (I)) \,.
\end{equation} 

Now,  with the functions
$$
X_i = \rho_0\left.\frac{ \partial^i v'}{ \partial t^i}   \right|_{t=0} \text{ for }  i=0,1,2,3,4\,,
$$
we define
\begin{equation}
\label{Z}
Z(t,x)=  \int_0^t Y(\cdot,x) =   \rho_0(x) \int_0^t  \partial_t^5 v' ( \cdot, x ) =  \rho_0(x) \partial_t^4 v'( t, x) - X_4(x)  \,,
\end{equation}
and
\begin{equation}
\label{X}
X(t,x)=\sum_{i=0}^4 X_i t^i+\int_0^t\int_0^{t_4}\int_0^{t_3}\int_0^{t_2} Z(t_1,x) dt_1dt_2dt_3dt_4 \,.
\end{equation}

We then see that $X\in C^0([0,T];H_0^1( I))$ is a solution of (\ref{div}), with $\partial_t^5X=Y$. In order to obtain a fixed-point for the map
$\bar v \mapsto v$, we need to establish better space regularity for $Z$, and hence $X$ and $v$.

\subsection{Improved space regularity for $Z$}
We introduce the variable $\check v$ defined by
\begin{equation*}
\check v(t,x)=\int_0^x \frac{X(t,\cdot)}{\rho_0(\cdot)} \,,
\end{equation*}
so that $\check v$ vanishes at $x=0$, and will us to employ the Poincar\'{e} inequality with this variable.

It is easy to see that
\begin{equation*}
X=\rho_0 \check v',
\end{equation*}
and
\begin{equation}\label{Z2}
Z=\rho_0 \partial_t^4\check v'.
\end{equation}
Thanks to the standard Hardy inequality, we thus have that
\begin{equation*}
\|\partial_t^4\check v'\|_0\le C \|Z\|_1,
\end{equation*}
and hence by Poincar\'e's inequality,
\begin{equation}
\label{g4}
\|\partial_t^4\check v(t,\cdot)\|_1\le C\|Z(t,\cdot)\|_1.
\end{equation}

With 
$$
F_0= \frac{Y_{\text{init}}}{\rho_0} + \left. \partial_t^4\bigr[\frac{2}{\bar\eta'}\bigl[\frac{\rho_0}{\bar\eta'}\bigr]'\bigl]' \right|_{t=0}\,,
$$
our starting point is the equation
\begin{equation*}
\frac{Y}{\rho_0}-\kappa [\frac{1}{\rho_0} (\rho_0\ Z)']' =-\partial_t^4\bigr[\frac{2}{\bar\eta'}\bigl[\frac{\rho_0}{\bar\eta'}\bigr]'\bigl]' +
F_0
\ \hbox{in}\ [0,T]\times  I,
\end{equation*}
which follows from our definition of $Z$ given in (\ref{Z}) and time-integration of (\ref{divt.a}).
From this equation,  we infer that
\begin{equation*}
\kappa \bigl\|[\frac{1}{\rho_0}(\rho_0\ Z)']'\bigr\|_0\le \bigl\|\partial_t^4\bigl[\frac{2}{\bar\eta'}\bigl[\frac{\rho_0}{\bar\eta'}\bigr]'\bigl]'\bigr\|_0
+\bigl\|\frac{Y}{\rho_0}\bigr\|_0 + \|F_0\|_0 \,.
\end{equation*}
By the standard Hardy inequality and the fact that $\bar v\in \mathcal{C}_T(M)$, we obtain the estimate
\begin{equation*}
\kappa \bigl\|[\frac{1}{\rho_0}(\rho_0\ Z)']'\bigr\|_0\le C_M \sqrt{T} +
C\|{Y}\|_1 +  \mathcal{N} _0 \,,
\end{equation*}
where $C_M$ is a constant that depends on $M$.  In particular, using (\ref{Z2}), we see that
$$
\frac{1}{\rho_0}(\rho_0\ Z)' = \rho_0 \partial_t^4\check v''+2 \rho_0'\partial_t^4\check v'
$$
so that
\begin{equation*} 
%
\kappa \bigl\|\rho_0 \partial_t^4\check v'''+3 \rho_0'\partial_t^4\check v'' + 2 \rho_0'' \partial_t^4 \check v'\bigr\|_0
\le C_M \sqrt{T} + C\|{Y}\|_1 +  \mathcal{N} _0 \,,
\end{equation*}
which implies that
\begin{subequations}
\begin{align}
\kappa \bigl\|(\rho_0 \partial_t^4\check v)'''\bigr\|_0 & \le C_M \sqrt{T} +
C\|{Y}\|_1 +  \mathcal{N} _0+ \kappa \|\rho_0''' \partial_t^4 \check v\|_0  + \kappa \|3\rho_0''\partial_t^4 \check v'\|_0\n\\
& \le C_M \sqrt{T} +
C\|{Y}\|_1+  \mathcal{N} _0+ \kappa( \|\rho_0'''\|_{L^\infty}+  3 \|\rho_0''\|_{L^\infty} )\|Z\|_1\n \,,
\end{align}
\end{subequations}
where we have used (\ref{g4}) in the second inequality above.     Having established in (\ref{g4})  that $(\rho_0\partial_t^4\check v)\in H_0^1( I)$, elliptic regularity shows that
\begin{equation}
\label{g5}
\kappa \|\rho_0 \partial_t^4\check v\|_3 \le  C_M \sqrt{T} +
C\|{Y}\|_1 + \mathcal{N} _0 + \kappa( \|\rho_0'''\|_{L^\infty}+  3 \|\rho_0''\|_{L^\infty} )\|Z\|_1\,.
\end{equation}
Now, thanks to our high-order Hardy-type inequality, we infer from from (\ref{g5}) that
\begin{equation}
\label{g6}
\kappa \| \partial_t^4\check v\|_2 \le  C_M \sqrt{T} +
C\|Y\|_1 +  \mathcal{N} _0  + \kappa( \|\rho_0'''\|_{L^\infty}+  3 \|\rho_0''\|_{L^\infty} )\|Z\|_1\,.
\end{equation}
Next we see that (\ref{g5}) implies that
\begin{equation*}
\kappa \|\rho_0 \partial_t^4\check v'+\rho' \partial_t^4\check v\|_2  \le  C_M \sqrt{T} +
C\|Y\|_1+ \mathcal{N} _0 + \kappa( \|\rho_0'''\|_{L^\infty}+  3 \|\rho_0''\|_{L^\infty} )\|Z\|_1\,,
\end{equation*}
which thanks to (\ref{g6}) and (\ref{Z2}) implies that
\begin{equation}
\label{g7}
\kappa \|Z\|_2 \le  C_M \sqrt{T} +
C\|Y\|_1 +  \mathcal{N} _0+  \kappa( \|\rho_0'''\|_{L^\infty}+  3 \|\rho_0''\|_{L^\infty} )\|Z\|_1\,.
\end{equation}

\subsection{Definition of $v$ and the existence of a fixed-point}
We are now in a position to define $v$ in the following fashion: let us first define on $[0,T]$
\begin{equation*}
f(t)=u_0(0)-\int_0^t \frac{1}{\rho_0}\bigl[\frac{\rho_0^2}{\bar\eta'^2}\bigr]'(\cdot,0)+\kappa\int_0^t\frac{1}{\rho_0} [\rho_0 X]'(\cdot,0),
\end{equation*}
which is well-defined thanks to (\ref{g7}) and (\ref{g3}). We next define
\begin{equation}
v(t,x)=f(t)+\check v(t,x).
\end{equation}
We then notice that from (\ref{div}), we immediately have that
\begin{equation*}
v_t'+ \bigl[\frac{1}{\rho_0} \bigl[\frac{\rho_0^2}{\bar\eta'^2}\bigr]'\bigr]'-\kappa \bigl[\frac{1}{\rho_0} [\rho_0^2 v']'\bigr]'=0,
\end{equation*}
from which we infer that in $[0,T]\times I$
\begin{equation*}
v_t+ \frac{1}{\rho_0} \bigl[\frac{\rho_0^2}{\bar\eta'^2}\bigr]'-\kappa \frac{1}{\rho_0} [\rho_0^2 v']'=g(t),
\end{equation*}
for some function $g$ depending only on $t$. By taking the trace of this equation on the left end-point $x=0$, we see that
\begin{equation*}
v_t(t,0)+ \frac{1}{\rho_0} \bigl[\frac{\rho_0^2}{\bar\eta'^2}\bigr]'(t,0)-\kappa \frac{1}{\rho_0} [\rho_0^2 v']'(t,0)=g(t),
\end{equation*}
which together with the identity
$$v_t(t,0)=f_t(t)=-\frac{1}{\rho_0} \bigl[\frac{\rho_0^2}{\bar\eta'^2}\bigr]'(t,0)+\kappa \frac{1}{\rho_0} [\rho_0^2 v']'(t,0)$$ 
shows that $g(t)=0$. Therefore, $v$ is a solution of (\ref{linear1}), and also satisfies by construction $v(0,\cdot)=u_0(\cdot)$.

We can now establish the existence of a fixed-point for  the mapping $\bar v\rightarrow v$ in $\mathcal{C}_T(M)$, with $T$ taken sufficiently small and depending a priori on $\kappa$. 
We first notice that, thanks to the estimates (\ref{g7}) and (\ref{g3}), we have the inequality
\begin{equation*}
\|\partial_t^4 f(t)\|_{L^2(0,T)}\le \mathcal{N} _0+\sqrt{T} C_M,
\end{equation*}
which together with (\ref{g6}) and (\ref{g3}) provides the estimate
\begin{equation}
\label{g8}
\|\partial_t^4 v\|_{L^2(0,T;H^2( I))}\le \mathcal{N} _0+C_\kappa \sqrt{T} C_M \,.
\end{equation}
Then,  (\ref{g5}) implies that
\begin{equation}
\label{g8b}
\|\rho_0\partial_t^4 v\|_{L^2(0,T;H^3( I))}+
\|\partial_t^4 v\|_{L^2(0,T;H^2( I))}\le \mathcal{N} _0+C_\kappa \sqrt{T} C_M \,,
\end{equation}
and combining this with (\ref{pt5v}) shows that
\begin{equation}
\label{g8c}
\|\partial_t^5 v\|_{L^2(0,T;H^1( I))}+
\|\rho_0 \partial_t^5 v'\|_{L^2(0,T;H^1( I))}\le \mathcal{N} _0+C_\kappa \sqrt{T} C_M \,,
\end{equation}

In turn,  (\ref{g8b}) shows that for 
\begin{equation}
\label{g9}
T\le \frac{\mathcal{N} _0^2}{C_\kappa C_M},
\end{equation}
 $v\in \mathcal{C}_T(M)$.
Moreover, it is clear that there is only one solution $v\in L^2(0,T;H^2( I))$  of (\ref{linear1}) with $v(0)=u_0$ (where this initial
condition is well-defined as $\|v_t\|_{L^2(0,T;H^1( I))}\le \mathcal{N} _0 +C_\kappa \sqrt{T} C_M$), 
since if we denote by $w$ another solution with the same regularity, the difference $\delta v=v-w$ satisfies $\delta v(0,\cdot)=0$ with $\rho_0\delta v_t-\kappa [\rho_0^2\delta v']'=0$ which implies
$$\frac{1}{2}\frac{d}{dt}\int_I \rho_0 \delta v^2 +\kappa \int_I \rho_0^2 \delta v^2=0,$$
which with $\delta v(0,\cdot)=0$ implies $\delta v=0$. So the mapping $\bar v\rightarrow v$ is well defined, and thanks to (\ref{g8}) is a mapping from $\mathcal{C}_T(M)$ into itself for $T=T_\kappa$ satisfying \ref{g9}). As it is furthermore clear that it is weakly continuous in the $L^2(0,T_\kappa;H^2( I))$ norm, the Tychonoff fixed-point theorem \cite{Deimling1985} provides us with the existence of a fixed-point to this mapping. Such a fixed-point, which we denote by $v_\kappa$, is a solution of the nonlinear degenerate parabolic 
$\kappa$-problem (\ref{approximate}), with  initial condition $v_\kappa(0,\cdot)=u_0(\cdot)$.  It should be clear that the fixed-point
$v_ \kappa $ also satisfies (\ref{pt5v}) and (\ref{g8b}) so that
\begin{align}
&\|\rho_0 \partial_t^4 v_ \kappa \|_{L^2(0,T;H^3( I))} +
\|\partial_t^4 v_ \kappa \|_{L^2(0,T;H^2( I))} \n \\
& \qquad +
\|\rho_0 \partial_t^5 v'_ \kappa \|_{L^2(0,T;H^1( I))} +
\|\partial_t^5 v_ \kappa \|_{L^2(0,T;H^1( I))}
\le M \,,
\label{g8d}
\end{align}

In the next section, we establish $\kappa $-independent estimates for $v_\kappa$ in $L^2(0,T_\kappa;H^2( I))$ (which are indeed
possible because our parabolic approximate $ \kappa $-problem respects the stucture of the original compressible Euler
equations (\ref{ce0})), from
which we infer a short time-interval of existence $[0,T]$, with $T$ independent of $\kappa$.   These $ \kappa $-independent estimates will
allows us to pass to the weak limit of the sequence $v_ \kappa $ as $ \kappa \to 0$ to obtain  the solution to (\ref{ce0}).

\section{Asymptotic estimates for  $v_\kappa$ which are independent of $\kappa$.} 

\subsection{The higher-order energy function appropriate for the asymptotic estimates as $ \kappa \to 0$.}

Our objective in this section is to show that the higher-order energy function $E$ defined in (\ref{E}) satisfies the inequality
\begin{equation}\label{poly}
\sup_{t \in [0,T]} E(t) \le M_0 + C\,T\, P( \sup_{t \in [0,T]} E(t)) \,,
\end{equation} 
where $P$ denotes a polynomial function, and
for $T>0$ taken sufficiently small,  with $M_0$ defined in (\ref{M0}).      The norms in $E$ are for solutions $v_ \kappa $ to
our degenerate parabolic $ \kappa $-problem (\ref{approximate}).   According to Theorem \ref{thm_ksoln}, 
$v_\kappa \in X_{T_ \kappa }$ with the additional bound $ \|\partial_t^4 v_ \kappa \|_{L^2(0,T_ \kappa ;H^2( I))} < \infty $ provided by (\ref{g8}).
As such,  the energy function  $E$ is continuous with respect to $t$, and the inequality (\ref{poly}) would thus establish a time
interval of existence and bound which are both independent of $ \kappa $.
For the sake of notational convenience,  we shall denote $v_\kappa$ by $\tilde v$.

\subsection{A $\kappa$-independent energy estimate on the fifth time-differentiated problem}
Our starting point shall be the fifth time-differentiated problem of (\ref{approximate}) for which we have, 
by naturally using $\partial_t^5 \v\in L^2(0,T_\kappa;H^1( I))$ as a test function, the following identity:
\begin{equation}
\label{a1}
\underbrace{\frac{1}{2} \frac{d}{dt}\int_I{\rho_0}|\partial_t^5\v|^2}_{\mathcal{I} _1} \
- \ \underbrace{\int_I \partial_t^5\bigl[\frac{\rho_0^2}{\tilde\eta'^2}\bigr] \partial_t^5\v'}_{ \mathcal{I} _2}  \
+ \ \underbrace{\kappa\int_I \rho_0^2(\partial_t^5\v')^2=0 }_{ \mathcal{I} _3}\,.
\end{equation}
In order to form the exact time-derivative in term $ \mathcal{I} _1$, we rely on the fact that  solutions we constructed to
(\ref{approximate}) satisfy
$ \partial_t^6 v \in L^2(0,T_ \kappa ; L^2 (I))$, which follows from the relation
\begin{equation*}
\partial_t^6 \v = \partial_t^5\left[\frac{2}{\tilde\eta'}\Bigl(\frac{\rho_0}{\tilde\eta'}\Bigr)'\right]+ \frac{\kappa}{\rho_0}\Bigl[\rho_0^2  \partial_t^5 v'\Bigr]' \,,
\end{equation*}
and the estimate (\ref{g8d}).
Upon integration in time, both the terms $ \mathcal{I} _1$ and $ \mathcal{I} _3$ provide  sign-definite energy contributions, so we
focus our attention on the nonlinear estimates required of the term $ \mathcal{I} _2$.

We see that
\begin{align*}
- \mathcal{I} _2&=2\int_I \partial_t^4\v'\bigl[\frac{\rho_0^2}{\tilde\eta'^3}\bigr] \partial_t^5\v'+\sum_{a=1}^4 c_a \int_I \partial_t^{5-a}\frac{1}{\tilde\eta'}\partial_t^{a}\frac{1}{\tilde\eta'}\rho_0^2\partial_t^5\v'\n\\
&=\frac{d}{dt}\int_I (\partial_t^4\v')^2\frac{\rho_0^2}{\tilde\eta'^3}+3\int_I (\partial_t^4\v')^2\v'\frac{\rho_0^2}{\tilde\eta'^4}+\sum_{a=1}^4 c_a \int_I  \partial_t^{5-a}\frac{1}{\tilde\eta'}\partial_t^{a}\frac{1}{\tilde\eta'}\rho_0^2\partial_t^5\v' \,.
\end{align*}
Hence integrating (\ref{a1})  from $0$ to $t\in[0,T_\kappa]$, we find that
\begin{align}
\label{a2}
\frac{1}{2} \int_I{\rho_0}\partial_t^5\v^2 (t) &+ \int_I (\partial_t^4\v')^2\frac{\rho_0^2}{\tilde\eta'^3} (t) +\kappa\int_0^t\int_I \rho^2(\partial_t^5\v')^2\n\\
&=\frac{1}{2} \int_I{\rho_0}\partial_t^5\v^2 (0) + \int_I (\partial_t^4\v')^2\frac{\rho_0^2}{\tilde\eta'^3} (0)-3\int_0^t\int_I (\partial_t^4\v')^2\v'\frac{\rho_0^2}{\tilde\eta'^4}\n\\
&\ \ \ -\sum_{a=1}^4 c_a \int_0^t\int_I  \partial_t^{5-a}\frac{1}{\tilde\eta'}\partial_t^{a}\frac{1}{\tilde\eta'}\rho_0^2\partial_t^5\v'\ .
\end{align}
We next show that all of the error terms, comprising the right-hand side of (\ref{a2}) can be bounded by $C t P( \sup_{[0,t]} E)$.
For the first spacetime integral appearing on the right-hand side of (\ref{a2}), it is clear that
\begin{equation}
\label{a3}
-3\int_0^t\int_I (\partial_t^4\v')^2\v'\frac{\rho_0^2}{\tilde\eta'^4} \le C t \ P(\sup_{[0,t]} E).
\end{equation}
We now study the last integrals on the right-hand side of (\ref{a2}). Using integration-by-parts in time, we have that
\begin{equation}
\label{a4}
\int_0^t\int_I  \partial_t^{5-a}\frac{1}{\tilde\eta'}\partial_t^{a}\frac{1}{\tilde\eta'}\rho_0^2\partial_t^5\v'=-\int_0^t\int_I  [\partial_t^{5-a}\frac{1}{\tilde\eta'}\partial_t^{a}\frac{1}{\tilde\eta'}]_t\rho_0^2\partial_t^4\v'+\left. \int_I  \partial_t^{5-a}\frac{1}{\tilde\eta'}\partial_t^{a}\frac{1}{\tilde\eta'}\rho_0^2\partial_t^4\v'\right|_0^t.
\end{equation}
We first consider the spacetime integral on the right-hand side of (\ref{a4}). As the sum is taken for $a=1$ to $4$, we then see that it will be written under the form of the sum of spacetime integrals of the following type:
\begin{align*}
I_1&=\int_0^t\int_I \rho_0 \partial_t^4\v' R(\tilde \eta) \rho_0\partial_t^4\v',\\
I_2&=\int_0^t\int_I \rho_0 \partial_t^3\v' v' \partial_t \v' R(\tilde \eta) \rho_0\partial_t^4\v',\\
I_3&=\int_0^t\int_I \rho_0 \partial_t^2\v' v' \partial_t^2 \v' R(\tilde \eta) \rho_0\partial_t^4\v',\\
I_4&=\int_0^t\int_I \rho_0 \partial_t^2\v' v' R(\tilde \eta) (\partial_t\v')^2\rho_0\partial_t^4\v'\ ,
\end{align*}
Where $R(\tilde\eta)$ denotes a rational function of $\tilde\eta$. We first immediately see that
\begin{equation}
\label{a5}
|I_1|\le C t \ P(\sup_{[0,t]} E).
\end{equation}
Next, we have that
\begin{align}
\label{a6}
|I_2| &\le C \int_0^t\int_I \|\rho_0 \partial_t^3\v'\|_{L^4} \|v'\|_{L^\infty} \|\partial_t \v'\|_{L^4} \|R(\tilde \eta)\|_{L^\infty} \|\rho_0\partial_t^4\v'\|_0\n\\
 &\le C \int_0^t\int_I \|\rho_0 \partial_t^3\v'\|_{H^{\frac{1}{2}}} \|v'\|_{L^\infty} \|\partial_t \v'\|_{H^{\frac{1}{2}}} \|R(\tilde \eta)\|_{L^\infty} \|\rho_0\partial_t^4\v'\|_0\n\\
& \le C t \ P(\sup_{[0,t]} E).
\end{align}
Similarly,
\begin{align}
\label{a6b}
|I_3| &\le C \int_0^t \|\rho_0 \partial_t^2\v'\|_{L^\infty} \|v'\|_{L^\infty} \|\partial_t^2 \v'\|_0\| R(\tilde \eta)\|_{L^\infty} \|\rho_0\partial_t^4\v'\|_0\n\\
 &\le C t \ P(\sup_{[0,t]} E),
\end{align}
and
\begin{align}
\label{a7}
|I_4| &\le C \int_0^t \|\rho_0 \partial_t^2\v'\|_{L^\infty} \|v'\|_{L^\infty} \|\partial_t \v'\|^2_{L^4}\| R(\tilde \eta)\|_{L^\infty} \|\rho_0\partial_t^4\v'\|_0\n\\
& \le C t \ P(\sup_{[0,t]} E),
\end{align}
where we have used the fact that in 1-D, $\|\cdot\|_{L^\infty}\le C \|\cdot\|_{H^1}$ and $\|\cdot\|_{L^4}\le C \|\cdot\|_{H^{\frac{1}{2}}}$.
Therefore, estimates (\ref{a2})--(\ref{a6}) provide us with
\begin{equation*}
\frac{1}{2} \int_I{\rho_0}\partial_t^5\v^2 (t) + \int_I (\partial_t^4\v')^2\frac{\rho_0^2}{\tilde\eta'^3} (t) +\kappa\int_0^t\int_I \rho_0^2(\partial_t^5\v')^2
\le M_0+ C t \ P(\sup_{[0,t]} E),
\end{equation*}
and thus, employing the fundamental theorem of calculus, 
\begin{align}
\label{a8}
&\frac{1}{2} \int_I{\rho_0}\partial_t^5\v^2 (t) + \int_I (\rho_0 \partial_t^4\v')^2 (t) +\kappa\int_0^t\int_I \rho_0^2(\partial_t^5\v')^2\n\\
&\qquad\qquad\qquad \qquad \le M_0+ C t \ P(\sup_{[0,t]} E)+ 3\int_I (\partial_t^4\v')^2(t){\rho_0^2}\int_0^t \frac{\v'}{\tilde\eta'^4}\n\\
 &\qquad\qquad\qquad \qquad \le M_0+ C t \ P(\sup_{[0,t]} E).
\end{align}

\subsection{Elliptic and Hardy-type estimates for $\partial_t^2 v(t)$}  Having obtained the energy estimate (\ref{a8}) for the
fifth time-differentiated problem, we can begin our bootstrapping
argument.  We now consider the third time-differentiated version of
 (\ref{approximate.a}),
\begin{equation*}
 \bigl[\partial_t^3\frac{\rho_0^2}{\tilde\eta'^2}\bigr]'-\kappa [\rho_0^2 \partial_t^3 \v']'=-\rho_0 \partial_t^4 \v,
\end{equation*}
which can be written as 
\begin{equation*}
 -2\bigl[\frac{\rho_0^2\partial_t^2 \v'}{\tilde\eta'^3}\bigr]'-\kappa [\rho_0^2 \partial_t^3 \v']'=-\rho_0\partial_t^4 \v+c_1\bigl[\frac{\rho_0^2\partial_t \v'\ \v'}{\tilde\eta'^4}\bigr]' +c_2 \bigl[\frac{\rho_0^2 \v'^3}{\tilde\eta'^5}\bigr]',
\end{equation*}
and finally rewritten as the following identity:
\begin{align}
 -2\bigl[{\rho_0^2\partial_t^2 \v'}\bigr]'-\kappa [\rho_0^2 \partial_t^3 \v']'=&-\rho_0\partial_t^4 \v+c_1\bigl[\frac{\rho_0^2\partial_t \v'\ \v'}{\tilde\eta'^4}\bigr]' +c_2 \bigl[\frac{\rho_0^2 \v'^3}{\tilde\eta'^5}\bigr]' \n\\
 &-2\bigl[{\rho_0^2\partial_t^2 \v'}\bigr]'(1-\frac{1}{\tilde\eta'^3})-6\rho_0^2\partial_t^2\v'\frac{\tilde\eta''}{\tilde\eta'^4}\ . \label{cs3}
\end{align}
Here, $c_1$ and $c_2$ are constants whose exact value is not important.

Therefore, using Lemma \ref{kelliptic} together with the fundamental theorem of calculus for the fourth term on the right-hand side
of (\ref{cs3}),  we obtain  that for any $t\in [0,T_\kappa]$,
\begin{align}
\label{a9}
\sup_{[0,t]} \bigl\|\frac{2}{\rho_0}\bigl[{\rho_0^2\partial_t^2 \v'}\bigr]'\bigr\|_0 \le & \sup_{[0,t]}\|\partial_t^4 \v\|_0 +\sup_{[0,t]} \bigl\|\frac{c_1}{\rho_0}\bigl[\frac{\rho_0^2\partial_t \v'\ \v'}{\tilde\eta'^4}\bigr]'\bigr\|_0 +\sup_{[0,t]} \bigl\|\frac{c_2}{\rho_0} \bigl[\frac{\rho_0^2 \v'^3}{\tilde\eta'^5}\bigr]'\bigr\|_0\n\\
 &+\sup_{[0,t]} \bigl\|\frac{2}{\rho_0}\bigl[{\rho_0^2\partial_t^2 \v'}\bigr]'\|_0\|3\int_0^\cdot\frac{\v'}{\tilde\eta'^4}\|_{L^\infty}+6\sup_{[0,t]} \bigl\|\rho_0\partial_t^2\v'\frac{\tilde\eta''}{\tilde\eta'^4}\bigr\|_0\ .
\end{align}
We next estimate each term on the right hand side of (\ref{a9}). For the first term, we will use our estimate (\ref{a8}) from which we infer for each $t\in [0,T_\kappa]$:
\begin{equation*}
\int_I \rho_0^2 [\partial_t^4\v^2+\partial_t^4\v'^2] (t) \le M_0+ C t \ P(\sup_{[0,t]} E).
\end{equation*}
Note that the first term on the left-hand side of (\ref{a10}) comes from the first term of (\ref{a8}), together with the fact that $\d\partial_t^4 v(t,x)=v_4(x)+\int_0^t \partial_t^5 v(\cdot,x).$ Therefore, the Sobolev weighted embedding estimate (\ref{w-embed}) provides us with
the following estimate:
\begin{equation}
\label{a10}
\int_I \partial_t^4\v^2 (t) \le M_0+ C t \ P(\sup_{[0,t]} E).
\end{equation}
The remaining terms will be estimated by simply using the definition of the energy function $E$.  For the second term on the right-hand side of (\ref{a9}), we have that
\begin{align*}
\bigl\|\frac{1}{\rho_0}\bigl[\frac{\rho_0^2\partial_t \v'\ \v'}{\tilde\eta'^4}\bigr]'\bigr\|_0
& \le \|(\rho_0 v_t')'\|_0 \bigl\|\frac{\v'}{\tilde\eta'^4}\bigr\|_{L^\infty}+\bigl\|\v_t'\bigl[\frac{\rho_0\ \v'}{\tilde\eta'^4}\bigr]'\bigr\|_0\\
 & \le C \|(\rho_0 \v_t')'\|_0 \|\v'\|_{\frac{3}{4}}+\bigl\|\v_t'\bigl[\frac{\rho_0'\ \v'}{\tilde\eta'^4}\bigr]\bigr\|_0+\bigl\|\v_t'\bigl[\frac{\rho_0\ \v''}{\tilde\eta'^4}\bigr]\bigr\|_0+4\bigl\|\v_t'\bigl[\frac{\rho_0\ \v'\tilde\eta''}{\tilde\eta'^5}\bigr]\bigr\|_0\\
&   \le C \|(\rho_0 v_1')'+\int_0^\cdot (\rho_0 v_{tt}')'\|_0 \|\v'\|_1^{\frac{1}{2}} \|v_1'+\int_0^\cdot \v_t'\|_{\frac{1}{2}}^{\frac{1}{2}}+C\|v_1'+\int_0^\cdot \v_{tt}'\|_0 \|\v'\|_{\frac{3}{4}}\\
& \qquad \ \ +C \|\v_t'\|_0 \|\rho_0 v''\|_{\frac{3}{4}} + C \|\v'\|_{\frac{3}{4}}\|\int_0^\cdot \v''\|_0 \|\rho_0 v_1'+\int_0^\cdot\rho_0 \v_{tt}'\|_1\\
& \le \|(\rho_0 v_1')'+\int_0^\cdot (\rho_0 v_{tt}')'\|_0 \|\v'\|_1^{\frac{1}{2}} \|v_1' +\int_0^\cdot \v_t'\|_{\frac{1}{2}}^{\frac{1}{2}}\\
& \qquad \ \ +C\|v_1'+\int_0^\cdot \v_{tt}'\|_0 \|\v'\|_1^{\frac{1}{2}} \|v_1'+\int_0^\cdot \v_t'\|_{\frac{1}{2}}^{\frac{1}{2}} \\
& \qquad \ \ +C \|\v_t'\|_0 \|\rho_0 v''\|_1^{\frac{3}{4}} \|\rho_0 v_0''+\int_0^\cdot \rho_0 v_t''\|_0^{\frac{1}{4}},
\end{align*}
where we have used the fact that $\|\cdot\|_{L^\infty}\le C \|\cdot\|_{\frac{3}{4}}$.
Thanks to the definition of $E$, the previous inequality provides us, for any $t\in [0,T_\kappa]$, with
\begin{equation}
\label{a11}
\sup_{[0,t]}\bigl\|\frac{1}{\rho_0}\bigl[\frac{\rho_0^2\partial_t \v'\ \v'}{\tilde\eta'^4}\bigr]'\bigr\|_0\le C \sup_{[0,t]} E^{\frac{3}{4}}\ (M_0+t P(\sup_{[0,t]}E)).
\end{equation}
For the third term on the right-hand side of (\ref{a9}), we have similarly that
\begin{align*}
\bigl\|\frac{1}{\rho_0} \bigl[\frac{\rho_0^2 \v'^3}{\tilde\eta'^5}\bigr]'\bigr\|_0 
&\le \|(\rho_0 v')'\|_{L^\infty} \bigl\|\frac{\v'^2}{\tilde\eta'^5}\bigr\|_{L^2}+\bigl\|\v'\bigl[\frac{\rho_0\ \v'^2}{\tilde\eta'^5}\bigr]'\bigr\|_0\\
& \le C \|(\rho_0 \v')'\|_{\frac{3}{4}} \|\v'\|^2_{\frac{1}{2}}+\bigl\|\v'\bigl[\frac{\rho_0'\ \v'^2}{\tilde\eta'^5}\bigr]\bigr\|_0+2\bigl\|\v'\bigl[\frac{\rho_0\ \v''\v'}{\tilde\eta'^5}\bigr]\bigr\|_0+5\bigl\|\v'\bigl[\frac{\rho_0\ \v'^2\tilde\eta''}{\tilde\eta'^6}\bigr]\bigr\|_0\\
& \le C \|(\rho_0 v_0')'+\int_0^\cdot (\rho_0 v_{t}')'\|_0^{\frac{1}{4}} \|(\rho_0 \v')'\|_1^{\frac{3}{4}} \|\v_0'+\int_0^\cdot \v'_t\|_{\frac{1}{2}}^2 
+C \|\v_0'+\int_0^\cdot \v'_t\|_{\frac{1}{2}}^3\\
&\qquad \ \ +C \|\v'\|_{\frac{1}{2}}^2 \|\rho_0 v''\|_{L^4} + C \|\v'\|_{\frac{1}{2}}^3\|\rho_0 \tilde\eta''\|_{L^4}\\
& \le C \|(\rho_0 v_0')'+\int_0^\cdot (\rho_0 v_{t}')'\|_0^{\frac{1}{4}} \|(\rho_0 \v')'\|_1^{\frac{3}{4}} \|\v_0'+\int_0^\cdot \v'_t\|_{\frac{1}{2}}^2 
+C \|\v_0'+\int_0^\cdot \v'_t\|_{\frac{1}{2}}^3\\
&\qquad \ \ +C \|\v_0'+\int_0^\cdot \v_t'\|_{\frac{1}{2}}^2 \|\rho_0 v_0''+\int_0^\cdot \rho_0 v_t''\|_0^{\frac{1}{2}}\|\rho_0 \v''\|_1^{\frac{1}{2}}  + C \|\v_0'+\int_0^\cdot \v_t'\|_{\frac{1}{2}}^3 \|\int_0^\cdot \rho_0 v''\|_1,
\end{align*}
where we have used the fact that $\|\cdot\|_{L^p}\le C_p \|\cdot\|_{\frac{1}{2}}$, for all $1<p<\infty$.
Again, using the  definition of $E$, the previous inequality provides us for any $t\in [0,T_\kappa]$ with
\begin{equation}
\label{a12}
\sup_{[0,t]}\bigl\| \frac{1}{\rho_0} \bigl[\frac{\rho_0^2 \v'^3}{\tilde\eta'^5}\bigr]' \bigr\|_0\le C \sup_{[0,t]} E^{\frac{1}{2}}\ (M_0+t P(\sup_{[0,t]}E)).
\end{equation}
For the fourth term of the right-hand side of (\ref{a9}), we see that
\begin{align}
\label{a13}
\bigl\|\frac{2}{\rho_0}\bigl[{\rho_0^2\partial_t^2 \v'}\bigr]'\|_0\|3\int_0^\cdot\frac{\v'}{\tilde\eta'^4}\|_{L^\infty} (t) & \le C [\ \|\rho_0 \partial_t^2 v''\|_0+\|\partial_t v'\|_0\ ] t \sup_{[0,t]} \|\v\|_2\n\\
 &\le C t P(\sup_{[0,t]} E)\ .
\end{align}
Similarly, the fifth term of the right-hand side of (\ref{a9}) yields the following estimate:
\begin{align}
\label{a14}
\bigl\|\rho_0\partial_t^2\v'\frac{\tilde\eta''}{\tilde\eta'^4}\bigr\|_0 (t)  &\le C \|\rho_0\partial_t^2\v'\|_{L^\infty}\|\tilde\eta''\|_0\n\\
&\le C \|\rho_0\partial_t^2\v'\|_1 \|\int_0^\cdot\v''\|_0\n\\
& \le C t P(\sup_{[0,t]} E)\ .
\end{align}
Combining the estimates (\ref{a11})--(\ref{a14}), we obtain the inequality
\begin{equation}
\label{a15}
\sup_{[0,t]} \bigl\|\frac{2}{\rho_0}\bigl[{\rho_0^2\partial_t^2 \v'}\bigr]'\bigr\|_0 \le C t P(\sup_{[0,t]} E) + C \sup_{[0,t]} E^{\frac{3}{4}}\ (M_0+t P(\sup_{[0,t]}E)).
\end{equation}

At this stage, we remind the reader,  that the solution $\v$ to our parabolic $ \kappa $-problem is in $X_{T_ \kappa }$, so that for any $t\in [0,T_\kappa]$,  $\partial_t^2 \v\in H^2(I)$.    Notice that
$$
\frac{1}{\rho_0}\bigl[{\rho_0^2\partial_t^2 \v'}\bigr]' =  \rho_0 \partial_t^2 \v'' + 2 \rho_0'\partial_t^2\v' \,,
$$
so (\ref{a15}) is equivalent to 
\begin{equation}\label{a15b}
\sup_{[0,t]} \bigl\|  \rho_0 \partial_t^2 \v'' + 2 \rho_0'\partial_t^2\v' \bigr\|_0 \le C t P(\sup_{[0,t]} E) + C \sup_{[0,t]} E^{\frac{3}{4}}\ (M_0+t P(\sup_{[0,t]}E)).
\end{equation} 
From this inequality, we would like to conclude that
both $\| \partial_t^2 \v '\|_0$  and $\|  \rho_0 \partial_t^2 \v'' \|_0$ are  bounded by the right-hand side of (\ref{a15b}); the regularity
provided by solutions of the $ \kappa $-problem allow us to arrive at this conclusion.

  By expanding the left-hand side of (\ref{a15b}), we see that
\begin{align}
\label{a16}
\sup_{[0,t]} \bigl\|  \rho_0 \partial_t^2 \v'' + 2\rho_0' \partial_t^2\v' \bigr\|_0^2&=\|\rho_0\partial_t^2\v''\|_0^2+4\|\rho_0'\partial_t^2\v'\|_0^2+4\int_I \rho_0  \partial_t^2 \v'' \  \rho_0'\partial_t^2\v'        \,.
\end{align}
Given the regularity of $\partial_t^2 \v$ provide by our parabolic $ \kappa$-problem, we notice that the cross-term in (\ref{a16}) is an exact derivative, 
$$
4\int_I \rho_0  \partial_t^2 \v'' \  \rho_0'\partial_t^2\v'  = 2 \int_I \rho_0\rho_0' \frac{\partial}{\partial x} |\partial_t^2 \v'|^2 \,,
$$
so that by integrating-by-parts, we find that
$$
4\int_I \rho_0  \partial_t^2 \v'' \  \rho_0'\partial_t^2\v'  = -2 \|\rho_0'\partial_t^2\v'\|_0^2  - \int_I \rho_0 \partial_t^2
\v' \  \rho_0'' \partial_t^2 \v' \,,
$$
and hence (\ref{a16}) becomes
\begin{align}
\label{a16b}
\sup_{[0,t]} \bigl\|  \rho_0 \partial_t^2 \v'' + 2\rho_0' \partial_t^2\v' \bigr\|_0^2&=\|\rho_0\partial_t^2\v''\|_0^2+2\|\rho_0'\partial_t^2\v'\|_0^2 - \int_I \rho_0 \partial_t^2
\v' \  \rho_0'' \partial_t^2 \v'        \,.
\end{align}
Since the energy function $E$ contains $\rho_0 \partial_t^2 \v(t) \in H^2(I)$ and $\partial_t^2\v(t) \in H^1(I)$, the
fundamental theorem of calculus shows that
$$
\int_I \rho_0 \partial_t^2\v' \  \rho_0'' \partial_t^2 \v'   \le
C t P(\sup_{[0,t]} E) + C \sup_{[0,t]} E^{\frac{3}{4}}\ (M_0+t P(\sup_{[0,t]}E)) \,.
$$
Combining this inequality with (\ref{a16b}) and  (\ref{a15}), yields 
\begin{equation*}
\sup_{[0,t]} [ \|\rho_0\partial_t^2\v''\|_0+\|\rho_0'\partial_t^2\v'\|_0 ]  \le C t P(\sup_{[0,t]} E) + C \sup_{[0,t]} E^{\frac{3}{4}}\ (M_0+t P(\sup_{[0,t]}E)),
\end{equation*}
and thus
\begin{equation*}
\sup_{[0,t]} [ \|\rho_0\partial_t^2\v''\|_0+\|\rho_0'\partial_t^2\v'\|_0 + \|\rho_0\partial_t^2\v'\|_0 ]  \le M_0+C t P(\sup_{[0,t]} E) + C \sup_{[0,t]} E^{\frac{3}{4}}\ (M_0+t P(\sup_{[0,t]}E)),
\end{equation*}
and hence with the physical vacuum conditions on $\rho_0$ given by (\ref{degen1}) and (\ref{degen2}), we have that
\begin{equation*}
\sup_{[0,t]} [ \|\rho_0\partial_t^2\v''\|_0+\|\partial_t^2\v'\|_0  ]  \le M_0+C t P(\sup_{[0,t]} E) + C \sup_{[0,t]} E^{\frac{3}{4}}\ (M_0+t P(\sup_{[0,t]}E)),
\end{equation*}
which, together with (\ref{a10}), provide us with the estimate
\begin{equation}
\label{a17}
\sup_{[0,t]} [ \|\rho_0\partial_t^2\v''\|_0+\|\partial_t^2\v\|_1  ]  \le M_0+C t P(\sup_{[0,t]} E) + C \sup_{[0,t]} E^{\frac{3}{4}}\ (M_0+t P(\sup_{[0,t]}E)).
\end{equation}

\subsection{Elliptic and Hardy-type estimates for $v(t)$.}  Having obtained the estimates for $\partial_t^2 \v(t)$ in (\ref{a17}), we can
next obtain our estimates for $\v(t)$.
To do so, we consider the first time-differentiated version of (\ref{approximate.a}), which yields the equation
\begin{equation*}
 -2\bigl[\frac{\rho_0^2 \v'}{\tilde\eta'^3}\bigr]'-\kappa [\rho_0^2 \partial_t \v']'=-\rho_0\partial_t^2 \v,
\end{equation*}
which we rewrite as the following identity:
\begin{align}
 -\frac{2}{\rho_0}\bigl[{\rho_0^2\v'}\bigr]'-\frac{\kappa}{\rho_0} [\rho_0^2 \partial_t \v']'=&-\partial_t^2 \v-\frac{2}{\rho_0}\bigl[{\rho_0^2 \v'}\bigr]'(1-\frac{1}{\tilde\eta'^3})\ . \label{cs11}
\end{align}
Using Lemma \ref{kelliptic}, we see that  for any $t\in [0,T_\kappa]$,
\begin{align}
\label{a18}
\sup_{[0,t]} \bigl\|\frac{1}{\rho_0}\bigl[{\rho_0^2 \v'}\bigr]'\bigr\|_1 \le & C\sup_{[0,t]}\| \partial_t^2 \v\|_1  +C\sup_{[0,t]} \bigl\|\frac{1}{\rho_0}\bigl[{\rho_0^2 \v'}\bigr]'\|_1\|\int_0^\cdot\frac{\v'}{\tilde\eta'^4}\|_{L^\infty} +C \|\frac{1}{\rho_0}[\rho_0^2 \v']'.\frac{\tilde\eta''}{\tilde\eta'^4}\|_0
\end{align}
We next estimate each term on the right hand side of (\ref{a18}). The bound for the first term on the right-hand side of
(\ref{a18}) is provided by (\ref{a17}).

The second term of the right-hand side of (\ref{a18}) is estimated as follows:
\begin{align}
\label{a20}
\bigl\|\frac{1}{\rho_0}\bigl[{\rho_0^2 \v'}\bigr]'\|_1\|\int_0^\cdot\frac{\v'}{\tilde\eta'^4}\|_{L^\infty} (t)  &\le C [ \| \rho_0 \v'''\|_0+\|v\|_2]\ t \sup_{[0,t]} \|\v\|_2\n\\
 &\le C t P(\sup_{[0,t]} E)\ .
\end{align}

For the third term on the right-hand side of (\ref{a18}),
\begin{align}
\label{a21}
\|\frac{1}{\rho_0}[\rho_0^2 \v']'.[\frac{\tilde\eta''}{\tilde\eta'^4}]\|_1 (t)
& \le C \|\frac{1}{\rho_0}[\rho_0^2 \v']'\|_{L^\infty} \|\tilde\eta''\|_0\n\\
 &\le C [ \| \rho_0 \v'''\|_0+\|v\|_2] \|\int_0^t \v''\|\n\\
& \le C t P(\sup_{[0,t]} E)\ .
\end{align}
Combining these estimates provides the inequality
\begin{equation*}
\sup_{[0,t]} \bigl\|\frac{1}{\rho_0}\bigl[{\rho_0^2\v'}\bigr]'\bigr\|_1 \le C t P(\sup_{[0,t]} E) + C \sup_{[0,t]} E^{\frac{3}{4}}\ (M_0+t P(\sup_{[0,t]}E)),
\end{equation*}
which leads us immediately to:
\begin{equation}
\label{a22}
\sup_{[0,t]} \| \rho_0\v'''+3\rho_0'v''\|_0 \le C t P(\sup_{[0,t]} E) + C \sup_{[0,t]} E^{\frac{3}{4}}\ (M_0+t P(\sup_{[0,t]}E)) \,.
\end{equation}
Now, since for any $t\in [0,T_\kappa]$, the solution $\v$ to our parabolic $ \kappa $-problem is in $H^3(I)$,  we infer that $\rho_0 v'''\in L^2(I)$. We can then apply the same integration-by-parts argument as in \cite{CoLiSh2009} to find that
\begin{align}
\label{a23}
\|\rho_0\v'''+3\rho_0'v''\|^2_0&=\|\rho_0\v'''\|_0^2+9\|{\rho_0'}\v''\|_0^2+3\int_I \rho_0\rho_0' [|\v''|^2]'\n\\
&=\|\rho_0\v'''\|_0^2+9\|\rho_0'\v''\|_0^2-3\int_I [\rho_0\rho_0''+\rho_0'^2] |\v''|^2\n\\
&=\|\rho_0\v'''\|_0^2+6\|\rho_0'\v''\|_0^2-3\int_I \rho_0\rho_0'' |\v''|^2
\end{align}
Combined with (\ref{a22}), this yields:
\begin{align*}
\sup_{[0,t]} [ \|\rho_0\v'''\|_0+\|\rho_0'\v''\|_0 ]  \le & C t P(\sup_{[0,t]} E) + C \sup_{[0,t]} E^{\frac{3}{4}}\ (M_0+t P(\sup_{[0,t]}E))\n\\
&+M_0+C\|\int_0^t \sqrt{\rho_0}\v_t''\|_0,
\end{align*}
and thus
\begin{equation*}
\sup_{[0,t]} [ \|\rho_0\v'''\|_0+\|\rho_0'\v''\|_0 + \|\rho_0\v''\|_0 ]  \le M_0+C t P(\sup_{[0,t]} E) + C \sup_{[0,t]} E^{\frac{3}{4}}\ (M_0,
\end{equation*}
With (\ref{degen1}) and (\ref{degen2}),  it follows that
\begin{equation*}
\sup_{[0,t]} [ \|\rho_0\v'''\|_0+\|\v''\|_0  ]  \le M_0+C t P(\sup_{[0,t]} E) + C \sup_{[0,t]} E^{\frac{3}{4}}\ (M_0+t P(\sup_{[0,t]}E)),
\end{equation*}
and hence
\begin{equation}
\label{a24}
\sup_{[0,t]} [ \|\rho_0\v'''\|_0+\|\v\|_2  ]  \le M_0+C t P(\sup_{[0,t]} E) + C \sup_{[0,t]} E^{\frac{3}{4}}\ (M_0+t P(\sup_{[0,t]}E)).
\end{equation}

\subsection{Elliptic and Hardy-type estimates for $\partial_t^3v(t)$ and $\partial_t v(t)$}
We consider  the fourth time-differentiated version of (\ref{approximate.a}):
$$
 \bigl[\partial_t^4\frac{\rho_0^2}{\tilde\eta'^2}\bigr]'-\kappa [\rho_0^2 \partial_t^4 \v']'=-\rho_0 \partial_t^5 \v \,,
$$
which can be rewritten as
\begin{equation*}
 -2\bigl[\frac{\rho_0^2\partial_t^3 \v'}{\tilde\eta'^3}\bigr]'-\kappa [\rho_0^2 \partial_t^4 \v']'=-\rho_0\partial_t^5 \v+c_1\bigl[\frac{\rho_0^2\partial_t^2 \v'\ \v'}{\tilde\eta'^4}\bigr]' +c_2 \bigl[\frac{\rho_0^{\frac{3}{2}} \partial_t\v'^2}{\tilde\eta'^5}\bigr]',
\end{equation*}
for some constants $c_1$ and $c_2$.  By employing the fundamental theorem of calculus and dividing by $\rho_0^ {\frac{1}{2}} $,
we obtain the equation
\begin{align*}
 -\frac{2}{\rho_0^{\frac{1}{2}}}\bigl[{\rho_0^2\partial_t^3 \v'}\bigr]'-\frac{\kappa}{\rho_0^{\frac{1}{2}}} [\rho_0^2 \partial_t^4 \v']'=&-\sqrt{\rho_0}\partial_t^5 \v+\frac{c_1}{{\rho_0}^{\frac{1}{2}}}\bigl[\frac{\rho_0^2\partial_t^2 \v'\ \v'}{\tilde\eta'^4}\bigr]' +\frac{c_2}{\rho_0^{\frac{1}{2}}} \bigl[\frac{\rho_0^2 \partial_t\v'^2}{\tilde\eta'^5}\bigr]'\\
 &-\frac{2}{\rho_0^{\frac{1}{2}}}\bigl[{\rho_0^2\partial_t^3 \v'}\bigr]'(1-\frac{1}{\tilde\eta'^3})-6\rho_0^{\frac{3}{2}}\partial_t^3\v'\frac{\tilde\eta''}{\tilde\eta'^4}\ .
\end{align*}
For any $t\in [0,T_\kappa]$,  Lemma \ref{kelliptic} provides the $ \kappa $-independent estimate
\begin{align}
\label{a25}
\sup_{[0,t]} \bigl\|\frac{2}{\rho_0^{\frac{1}{2}}}\bigl[{\rho_0^2\partial_t^3 \v'}\bigr]'\bigr\|_0 \le & \sup_{[0,t]}\|\sqrt{\rho_0}\partial_t^5 \v\|_0 +\sup_{[0,t]} \bigl\|\frac{c_1}{\rho_0^{\frac{1}{2}}}\bigl[\frac{\rho_0^2\partial_t^2 \v'\ \v'}{\tilde\eta'^4}\bigr]'\bigr\|_0 +\sup_{[0,t]} \bigl\|\frac{c_2}{\rho_0^{\frac{1}{2}}} \bigl[\frac{\rho_0^2 \partial_t\v'^2}{\tilde\eta'^5}\bigr]'\bigr\|_0\n\\
 &+\sup_{[0,t]} \bigl\|\frac{2}{\rho_0^{\frac{1}{2}}}\bigl[{\rho_0^2\partial_t^3 \v'}\bigr]'\|_0\|3\int_0^\cdot\frac{\v'}{\tilde\eta'^4}\|_{L^\infty}+6\sup_{[0,t]} \bigl\|\rho_0^{\frac{3}{2}}\partial_t^2\v'\frac{\tilde\eta''}{\tilde\eta'^4}\bigr\|_0\ .
\end{align}

We estimate each term on the right-hand side of (\ref{a25}).  The first term on the right-hand side is bounded by
$ M_0+ C t \ P(\sup_{[0,t]} E)$ thanks to (\ref{a8}).
 For the second term on the right-hand side of (\ref{a25}), we have that
\begin{align*}
\bigl\|\frac{1}{\rho_0^{\frac{1}{2}}}\bigl[\frac{\rho_0^2\partial_t^2 \v'\ \v'}{\tilde\eta'^4}\bigr]'\bigr\|_0 &\le \|\sqrt{\rho_0}(\rho_0 \partial_t^2\v')'\|_0 \bigl\|\frac{\v'}{\tilde\eta'^4}\bigr\|_{L^\infty}+\bigl\|\sqrt{\rho_0}\partial_t^2\v'\bigl[\frac{\rho_0\ \v'}{\tilde\eta'^4}\bigr]'\bigr\|_0\\
 &\le C \|\sqrt{\rho_0}(\rho_0 \partial_t^2\v')'\|_0 \|\v'\|_{\frac{3}{4}}+\bigl\|\sqrt{\rho_0}\partial_t^2\v'\bigl[\frac{\rho_0'\ \v'}{\tilde\eta'^4}\bigr]\bigr\|_0+\bigl\|\sqrt{\rho_0}\partial_t^2\v'\bigl[\frac{\rho_0\ \v''}{\tilde\eta'^4}\bigr]\bigr\|_0\n\\
&\ \ +4\bigl\|\sqrt{\rho_0}\partial_t^2\v'\bigl[\frac{\rho_0\ \v'\tilde\eta''}{\tilde\eta'^5}\bigr]\bigr\|_0\\
& \le C \|\sqrt{\rho_0}(\rho_0 \v_2')'+\int_0^\cdot \sqrt{\rho_0}(\rho_0 \partial_t^3\v')'\|_0 \|\v'\|_1^{\frac{1}{2}} \|\v_1'+\int_0^\cdot \v_t'\|_{\frac{1}{2}}^{\frac{1}{2}}\n\\
&\ \ +C\|\sqrt{\rho_0}\v_2'+\int_0^\cdot \sqrt{\rho_0}\partial_t^3\v'\|_0 \|\v'\|_{\frac{3}{4}}\\
&\ \ +C \|\sqrt{\rho_0}\partial_t^2\v'\|_0 \|\rho_0 v''\|_{\frac{3}{4}} + C \|\v'\|_{\frac{3}{4}}\|\int_0^\cdot \v''\|_0 \|\rho_0^{\frac{3}{2}} \v_2'+\int_0^\cdot\rho_0^{\frac{3}{2}} \partial_t^3\v'\|_1\\
& \le \|\sqrt{\rho_0}(\rho_0 \v_2')'+\int_0^\cdot \sqrt{\rho_0}(\rho_0 \v_{tt}')'\|_0 \|\v'\|_1^{\frac{1}{2}} \|\v_1' +\int_0^\cdot \v_t'\|_{\frac{1}{2}}^{\frac{1}{2}}\\
&\ \ +C\|\sqrt{\rho_0}\v_2'+\int_0^\cdot \sqrt{\rho_0}\v_{tt}'\|_0 \|\v'\|_1^{\frac{1}{2}} \|\v_1'+\int_0^\cdot \v_t'\|_{\frac{1}{2}}^{\frac{1}{2}} \\
&\ \ +C \|\sqrt{\rho_0}\partial_t^2\v'\|_0 \|\rho_0 v''\|_1^{\frac{3}{4}} \|\rho_0 v_0''+\int_0^\cdot \rho_0 v_t''\|_0^{\frac{1}{4}}\n\\
&\ \ +C \|\v'\|_1^{\frac{1}{2}} \|\v_1'+\int_0^\cdot \v_t'\|_{\frac{1}{2}}^{\frac{1}{2}} \|\int_0^\cdot \v''\|_0 \|\rho_0^{\frac{3}{2}} \v_2'+\int_0^\cdot\rho_0^{\frac{3}{2}} \partial_t^3\v'\|_1 \,,
\end{align*}
where we have again used the fact that $\|\cdot\|_{L^\infty}\le C \|\cdot\|_{\frac{3}{4}}$.
Thanks to the definition of $E$, the previous inequality shows that for any $t\in [0,T_\kappa]$, 
\begin{equation}
\label{a27}
\sup_{[0,t]}\bigl\|\frac{1}{\sqrt{\rho_0}}\bigl[\frac{\rho_0^2\partial_t^2 \v'\ \v'}{\tilde\eta'^4}\bigr]'\bigr\|_0\le C \sup_{[0,t]} E^{\frac{3}{4}}\ (M_0+t P(\sup_{[0,t]}E)).
\end{equation}
For the third term on the right-hand side of (\ref{a25}), we have similarly that
\begin{align}
\label{a28}
\bigl\|\frac{1}{\sqrt{\rho_0}} \bigl[\frac{\rho_0^2 \partial_t\v'^2}{\tilde\eta'^5}\bigr]'\bigr\|_0 (t) & \le 2\|(\rho_0 \partial_t\v')'\|_{0} \bigl\|\sqrt{\rho_0}\frac{\partial_t\v'}{\tilde\eta'^5}\bigr\|_{L^\infty}+5\bigl\|\sqrt{\rho_0}\frac{\partial_t\v'^2}{\tilde\eta'^6}\|_{L^1}\|\rho_0 \tilde\eta''\|_{L^\infty}\n\\
& \le C \|(\rho_0\v_1')'+\int_0^t (\rho_0\partial_{tt}\v')'\|_0\|\sqrt{\rho_0}\partial_t\v'\|_{\frac{3}{4}}+C \|\partial_t\v'\|_0^2\|\int_0^t (\rho_0\v'')'\|_0\n\\
& \le C \|(\rho_0\v_1')'+\int_0^t (\rho_0\partial_{tt}\v')'\|_0\|\sqrt{\rho_0} \partial_t\v'\|_{0}^{1-\alpha}\|(\sqrt{\rho_0}\partial_t\v')'\|_{L^{2-a}}^{\alpha}\n\\
&\ \ +C \|\partial_t\v'\|_0^2\|\int_0^t (\rho_0\v'')'\|_0\n\\
& \le C \|(\rho_0\v_1')'+\int_0^t (\rho_0\partial_{tt}\v')'\|_0\|\v'_1+\int_0^t\partial_{tt}\v'\|_{0}^{1-\alpha}\|(\sqrt{\rho_0}\partial_t\v')'\|_{L^{2-a}}^{\alpha}\n\\
&\ \ +C \|\partial_t\v'\|_0^2\|\int_0^t (\rho_0\v'')'\|_0 \,,
\end{align}
where $0<a<\frac{1}{2}$ is given and $0<\alpha=\frac{3-3a}{4+3a}<1$.

The only term on the right-hand side of (\ref{a28}) which is not directly contained in the definition of $E$ is $\|(\sqrt{\rho_0}\partial_t\v')'\|_{L^{2-a}}^\alpha$. To this end, we notice that
\begin{align}
\label{a29}
\|(\sqrt{\rho_0}\partial_t\v')'\|_{L^{2-a}} &\le \|\frac{\partial_t\v'}{2\sqrt{\rho_0}}\|_{L^{2-a}}+\|\sqrt{\rho_0} \v_{tt}''\|_0\n\\
 &\le \|\frac{1}{2\sqrt{\rho_0}}\|_{L^{2-\frac{a}{2}}}\|\partial_t\v'\|_{\frac{1}{2}}+\|\sqrt{\rho_0} \v_{tt}''\|_0 \,,
\end{align}
where we have used the fact that $\|\cdot\|_{L^p}\le C_p \|\cdot\|_{\frac{1}{2}}$, for all $1<p<\infty$.
Thanks to the definition of $E$, the previous inequality and (\ref{a28}) provides us for any $t\in [0,T_\kappa]$ with
\begin{equation}
\label{a30}
\sup_{[0,t]}\bigl\| \frac{1}{\rho_0} \bigl[\frac{\rho_0^2 \v'^3}{\tilde\eta'^5}\bigr]' \bigr\|_0\le C \sup_{[0,t]} E^\alpha\ (M_0+t P(\sup_{[0,t]}E)),
\end{equation}
where we recall again that $0<\alpha=\frac{3-3a}{4+3a}<1$.

The fourth term on the right-hand side of (\ref{a25}) is easily treated:
\begin{align}
\label{a31}
\bigl\|\frac{1}{\sqrt{\rho_0}}\bigl[{\rho_0^2\partial_t^3 \v'}\bigr]'\|_0\|\int_0^t\frac{\v'}{\tilde\eta'^4}\|_{L^\infty} (t)  & \le C [\ \|\rho_0^{\frac{3}{2}} \partial_t^3 v''\|_0+\|\sqrt{\rho_0}\partial_t^3 v'\|_0\ ] t \sup_{[0,t]} \|\v\|_2\n\\
 & \le C t P(\sup_{[0,t]} E)\ .
\end{align}
Similarly, the fifth term on the right-hand side of (\ref{a25}) is estimated as follows:
\begin{align}
\label{a32}
\bigl\|\rho_0^{\frac{3}{2}}\partial_t^3\v'\frac{\tilde\eta''}{\tilde\eta'^4}\bigr\|_0 (t) & \le C \|\rho_0^{\frac{3}{2}}\partial_t^3\v'\|_{L^\infty}\|\tilde\eta''\|_0\n\\
& \le C [\ \|\rho_0^{\frac{3}{2}} \partial_t^3 v''\|_0+\|\sqrt{\rho_0}\partial_t^3 v'\|_0\ ] \bigl\|\int_0^t\v''\bigr\|_0\n\\
& \le C t P(\sup_{[0,t]} E)\ .
\end{align}
Combining the estimates (\ref{a25})--(\ref{a32}), we can infer that
\begin{equation}
\label{a33}
\sup_{[0,t]} \bigl\|\frac{1}{\rho_0^{\frac{1}{2}}}\bigl[{\rho_0^2\partial_t^3 \v'}\bigr]'\bigr\|_0 \le C t P(\sup_{[0,t]} E) + C \sup_{[0,t]} E^\alpha\ (M_0+t P(\sup_{[0,t]}E)).
\end{equation}
Now, since for any $t\in [0,T_\kappa]$, solutions to our parabolic $ \kappa $-problem have the regularity $\partial_t^2v\in H^2(I)$, 
we integrate-by-parts:
\begin{align}
\label{a34}
\bigl\|\frac{1}{\rho_0^{\frac{1}{2}}}\bigl[{\rho_0^2\partial_t^3 \v'}\bigr]'\bigr\|^2_0&=\|\rho_0^{\frac{3}{2}}\partial_t^3\v''\|_0^2+4\|{\rho_0}^{\frac{1}{2}}\rho_0'\partial_t^3\v'\|_0^2+2\int_I \rho_0'\rho_0^2 [|\partial_t^3\v'|^2]'\n\\
&=\|\rho_0^{\frac{3}{2}}\partial_t^3\v''\|_0^2+4\|\rho_0'\rho_0^{\frac{1}{2}}\partial_t^3\v'\|_0^2-4\int_I \rho_0'^2\rho_0 |\partial_t^3\v'|^2-2\int_I \rho_0''\rho_0^2 |\partial_t^3\v'|^2\n\\
&=\|\rho_0^{\frac{3}{2}}\partial_t^3\v''\|_0^2-2 \int_I \rho_0''\rho_0^2 |\partial_t^3\v'|^2.
\end{align}
Combined with (\ref{a33}), and the fact that $\rho_0\partial_t^3\v'=\rho_0\v_3+\int_0^cdot \rho_0\partial_t^4\v'$ for the second term on the right-hand side of (\ref{a34}), we find that
\begin{equation}
\label{a35}
\sup_{[0,t]} \|\rho_0^{\frac{3}{2}}\partial_t^3\v''\|_0   \le C t P(\sup_{[0,t]} E) + C \sup_{[0,t]} E^\alpha\ (M_0+t P(\sup_{[0,t]}E)).
\end{equation}
Now, since $\frac{1}{\rho_0^{\frac{1}{2}}}\bigl[{\rho_0^2\partial_t^3 \v'}\bigr]'=\rho_0^{\frac{3}{2}}\partial_t^3\v''+2 {\rho_0}^{\frac{1}{2}}\rho_0'\partial_t^3\v'$, the estimates (\ref{a33}) and (\ref{a35}) also imply that
\begin{equation}
\label{a36}
\sup_{[0,t]} \|{\rho_0}^{\frac{1}{2}}\rho_0'\partial_t^3\v'\|_0   \le C t P(\sup_{[0,t]} E) + C \sup_{[0,t]} E^\alpha\ (M_0+t P(\sup_{[0,t]}E)).
\end{equation}
Therefore,
\begin{align*}
\sup_{[0,t]} [ \|\rho_0^{\frac{3}{2}}\partial_t^3\v''\|_0+\|\rho_0'\rho_0^{\frac{1}{2}}\partial_t^3\v'\|_0  +\|\rho_0^{\frac{3}{2}}\partial_t^3\v'\|_0]  \le &M_0+C t P(\sup_{[0,t]} E) \\
&+ C \sup_{[0,t]} E^\alpha\ (M_0+t P(\sup_{[0,t]}E)),
\end{align*}
so that with (\ref{degen1}) and (\ref{degen2}), 
\begin{equation*}
\sup_{[0,t]} [ \|\rho_0^{\frac{3}{2}}\partial_t^3\v''\|_0+\|\rho_0^{\frac{1}{2}}\partial_t^3\v'\|_0  ]  \le M_0+C t P(\sup_{[0,t]} E) + C \sup_{[0,t]} E^\alpha\ (M_0+t P(\sup_{[0,t]}E)) \,.
\end{equation*}
Together with (\ref{a10}) and the  weighted embedding estimate (\ref{w-embed}), the above inequality shows that
\begin{equation}
\label{a37}
\sup_{[0,t]} [ \|\rho_0^{\frac{3}{2}}\partial_t^3\v''\|_0+\|\partial_t^3\v\|_{\frac{1}{2}}  ]  \le M_0+C t P(\sup_{[0,t]} E) + C \sup_{[0,t]} E^\alpha\ (M_0+t P(\sup_{[0,t]}E)).
\end{equation}

By studying the second  time-differentiated version of (\ref{approximate.a}) in the same manner, we find that
\begin{equation}
\label{a38}
\sup_{[0,t]} [ \|\rho_0^{\frac{3}{2}}\partial_t\v'''\|_0+\|\partial_t\v\|_{\frac{3}{2}}  ]  \le M_0+C t P(\sup_{[0,t]} E) + C \sup_{[0,t]} E^\alpha\ (M_0+t P(\sup_{[0,t]}E)) \,.
\end{equation}

\section{Proof of Theorem \ref{theorem_main}}
 
\subsection{Time of existence and bounds independent of $ \epsilon $ and existence of solutions to (\ref{ce0})}\label{subsec_finish1}
 Summing the inequalities (\ref{a8}), (\ref{a17}), (\ref{a24}), (\ref{a37}), (\ref{a38}), we find that
$$
 \sup_{t \in [0,T]} E(t) \le
 M_0+C t P(\sup_{[0,t]} E) + C \sup_{[0,t]} E^\alpha\ (M_0+t P(\sup_{[0,t]}E)) \,.
 $$
 As $ \alpha < 1$, by employing Young's inequality and readjusting the constants, we obtain
$$
 \sup_{t \in [0,T]} E(t) \le
 M_0 +  C \, T\, P({\sup_{t\in[0,T]}} E(t)) \,.
 $$

Just as in Section 9 of \cite{CoSh2006}, this provides us with
a time of existence $T_1$ independent of $\kappa$ and an
estimate on $(0,T_1)$ independent of $\kappa$ of the type:
\begin{equation}
\sup_{t \in [0,T_1]}  E(t) \le 2 M_0 \,. \label{eq420}
\end{equation}
In particular, our sequence of solutions $(v_ \kappa )$ satisfy the $ \kappa $-independent bound (\ref{eq420}) on
the $ \kappa $-independent time-interval $(0,T_1)$.   

\subsection{The limit as $ \kappa \rightarrow 0$} 
By the $\kappa$-independent estimate (\ref{eq420}), there exists a subsequence of
$\{v_ \kappa \}$ which converges weakly to $v$ in $L^2(0,T; H^2(I))$.  With $\eta(t,x) = x + \int_0^t v(s,x)ds$,
by standard compactness arguments, we see that a further subsequence of $v_ \kappa $ and $\eta'_ \kappa$
uniformly converges to $v$ and $\eta'$, respectively, which shows that $v$ is the solution to (\ref{ce0}) and
 $v(0,x) = u_0(x)$.

\subsection{Uniqueness of solutions to the compressible Euler equations (\ref{ce0})} For uniqueness, we require the initial
data to have one space-derivative better regularity than for existence.   Given the assumption  (\ref{uniquedata}) on the data $(u_0,\rho_0)$,
repeating our argument for existence, we can produce a solution $v$ on $[0,T_1]$ which satisfies the estimate
$$
\sum_{s=0}^3\left[ \|\partial_t^{2s}v(t,\cdot)\|^2_{H^{3-s}(I)} +  \|\rho_0 \partial_t^{2s} v(t,\cdot)\|^2_{H^{4-s}(I)} \right] < \infty \,,
$$
and has the flow $\eta(t,x)=x + \int_0^t v(s,x) ds$.
For the sake of contradiction, let us assume that $w$ is also a solution on $[0,T_1]$ with initial data $(u_0, \rho_0)$, satisfying the same estimate, with flow
$\psi(t,x)=x + \int_0^t w(s,x)ds$.

We define
$$
\delta v = v -w \,,
$$
in which case we have the following equation for $ \delta v$:
\begin{subequations}
  \label{cunique}
\begin{alignat}{2}
\rho_0  \delta v_t + (\rho_0^2[{\eta' }^{-2} - {\psi'} ^{-2} ])'&=0  &&\text{in} \ \  I \times (0,T_1] 
\,, \label{cunique.a}\\
\delta v&= 0 \ \ \ &&\text{on} \ \  I \times \{t=0\} \,, \label{cunique.b}\\
\rho_0& = 0 \ \ &&\text{ on }  \partial I \,. \label{cunique.c}
\end{alignat}
\end{subequations}

By considering the fifth time-differentiated version of (\ref{cunique.a}) and taking the $L^2(I)$ inner-product with $ \partial_t \delta v$, we obtain the analogue of (\ref{a8}) (with $ \kappa =0$) for
$ \delta v$.  The additional error terms which arise  are easily controlled
by the fact that both $v$ and $w$ have one space-derivative better regularity than the energy function $E$.    This produces a good
bound for $ \partial_t^4  \delta v \in L^ \infty (0,T_1; L^2(I))$.  By repeating the elliptic and Hardy-type estimates for $\partial_t^2 \delta v \in
L^ \infty (0,T_1; H^1(I))$ and $\partial \delta v \in
L^ \infty (0,T_1; H^2(I))$, and using (\ref{cunique.b}),  we obtain the inequality
\begin{align*} 
& \sup_{t \in [0,T_1]} (\|\partial_t^4  \delta v(t)\|_0^2 + \|\partial_t^2  \delta v(t)\|_1^2 +  \|  \delta v(t)\|_2^2 ) \\
& \qquad\qquad\qquad
\le C\, T_1 \, 
P(  \sup_{t \in [0,T_1]} (\|\partial_t^4  \delta v(t)\|_0^2 + \|\partial_t^2  \delta v(t)\|_1^2 +  \|  \delta v(t)\|_2^2 )) \,,
\end{align*} 
which shows that $ \delta v=0$.

\subsection{Optimal regularity for initial data} We smoothed our initial data  $(u_0,\rho_0)$ in order to  construct solutions to
our degenerate parabolic $ \kappa $-problem (\ref{approximate}).   Having obtained solutions which depend only on $E(0,v)$,
a standard density argument shows that the initial data needs only to satisfy $M_0 <  \infty $.

\section{The case $\gamma \neq 2$}
In this section, we describe the modifications to the energy function and the methodology for the case that $\gamma \neq 2$.  
We denote by $a_0$ the integer satisfying the inequality
$$
1 < 1+ {\frac{1}{\gamma -1}}  -a_0 \le 2 \,.
$$
Letting
$$
d(x) = \text{dist}(x, \partial I) \,,
$$
We consider the following higher-order energy function:
\begin{align*} 
E_\gamma(t,v) & = \sum_{s=0}^4 \| v(t, \cdot )\|^2_{2 - {\frac{s}{2}} }  + \sum_{s=0}^2 \| d \, \partial_t^{2s} v(t, \cdot )\|^2_{3-s} 
+  \| \sqrt{d} \, \partial_t\partial_x^{2} v(t, \cdot )\|^2_{0} +  \| \sqrt{d} \, \partial_t^3 \partial_x v(t, \cdot )\|^2_{0} \\
& \qquad + \sum_{a=0}^{a_0} \| \sqrt{d}^{1+ {\frac{1}{\gamma-1}} - a} \partial_t^{4+a_0-a} v'( t, \cdot )\|_0^2 \,,
\end{align*} 
and define the polynomial function $M_0^\gamma = P( E_ \gamma (0, v))$.
Notice the last sum in $E_ \gamma $ appears whenever $ \gamma < 2$, and the number of time-differentiated problems increases
as $\gamma \to 1$.

Using the same procedure as we have detailed for the case that $\gamma =2$, we have the following 

\begin{theorem}[Existence and uniqueness for any $\gamma >1$]\label{thm_main2}
Given initial data $(u_0, \rho_0)$ such that $M^\gamma_0< \infty $ and the physical vacuum condition (\ref{degen}) holds for $\rho_0$, there exists a  solution to (\ref{ceuler0}) (and hence (\ref{ceuler})) on $[0,T_\gamma]$ for $T_\gamma>0$ taken
 sufficiently small, such that
 $$
 \sup_{t \in [0,T]} E(t) \le 2M^\gamma_0 \,.
 $$
 Moreover if the initial data satisfies
 $$
 \sum_{s=0}^3 \|\partial_t^sv(0,\cdot)\|^2_{H^{3-s}(I)} + \sum_{s=0}^3 \|d\, \partial_t^{2s} v(0,\cdot)\|^2_{H^{4-s}(I)} 
 + \sum_{a=0}^{a_0} \| \sqrt{d}^{1+ {\frac{1}{\gamma-1}} - a} \partial_t^{6+a_0-a} v'( 0, \cdot )\|_0^2< \infty \,,
 $$
 then the solution is unique.
\end{theorem}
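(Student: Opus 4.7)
The proof of Theorem \ref{thm_main2} follows the same four-stage architecture as the proof for $\gamma=2$ --- degenerate parabolic regularization, solution of the $\kappa$-problem by Tychonoff, $\kappa$-independent energy and elliptic/Hardy bootstrap, passage to the limit --- with modifications dictated by the degeneracy $\rho_0 \sim d^{1/(\gamma-1)}$ implied by the physical vacuum condition. The plan is to consider the regularization
\begin{equation*}
\rho_0 v_t + (\rho_0^\gamma/\eta_x^\gamma)_x = \kappa[\rho_0^\gamma v_x]_x \quad \text{in } I \times (0,T],
\end{equation*}
with boundary and initial data as in (\ref{approximate}). The solvability of the linearization and the corresponding fixed-point construction transfer essentially verbatim once $\rho_0^2$ is replaced by $\rho_0^\gamma$ and the auxiliary Dirichlet variable used for the Galerkin estimates is chosen to vanish at $\Gamma$, because the only structural input is the higher-order Hardy inequality of Lemma \ref{Hardy}.

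The top $\kappa$-independent energy estimate comes from testing the $(5+a_0)$-th time-differentiated equation against $\partial_t^{5+a_0} v$. The sign-definite contributions are
\begin{equation*}
\frac{1}{2}\frac{d}{dt}\int_I \rho_0 |\partial_t^{5+a_0} v|^2 \;+\; \gamma\frac{d}{dt}\int_I \frac{\rho_0^\gamma}{\tilde\eta_x^{\gamma+1}} |\partial_t^{4+a_0} v'|^2 \;+\; \kappa \int_I \rho_0^\gamma |\partial_t^{5+a_0} v'|^2,
\end{equation*}
and since $\rho_0^\gamma \sim d^{1+1/(\gamma-1)}$ this controls $\|\sqrt{d}^{1+1/(\gamma-1)} \partial_t^{4+a_0} v'\|_0^2$, which is precisely the $a=0$ term in the extra sum defining $E_\gamma$. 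The role of $a_0$ is now transparent: the defining inequality $1 < 1 + 1/(\gamma-1) - a_0 \le 2$ counts exactly how many successive weighted top-level estimates must be performed before the weight exponent falls into the integer range $(1,2]$, at which point the bootstrap reduces to the one already carried out for $\gamma=2$. Each intermediate $a \in \{1, \ldots, a_0\}$ is handled by an analogous energy identity obtained after peeling off one factor of $\rho_0^{\gamma-1}\sim d$ by integration by parts, producing the weighted bound $\|\sqrt{d}^{1+1/(\gamma-1)-a}\partial_t^{4+a_0-a} v'\|_0$.

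From this point I would run the elliptic and Hardy-type bootstrap for $\partial_t^{2s}v$ and $\partial_t^{2s+1}v$ as in the $\gamma=2$ case, systematically replacing $\rho_0^2$ by $\rho_0^\gamma$. Lemma \ref{kelliptic} absorbs the $\kappa$-term via the fundamental theorem of calculus; Lemma \ref{Hardy} converts $\rho_0$-weighted derivative estimates into unweighted Sobolev norms; and the weighted embedding (\ref{w-embed}) applied with $p = 1/(\gamma-1)$ produces the fractional Sobolev norms $\|v\|_{2-s/2}$ of the first sum in $E_\gamma$. Summing the resulting estimates yields the polynomial inequality
\begin{equation*}
\sup_{t \in [0,T]} E_\gamma(t) \le M_0^\gamma + C\, T\, P(\sup_{t \in [0,T]} E_\gamma(t)),
\end{equation*}
from which the $\kappa$-independent time of existence, the weak limit $\kappa\to 0$, the uniqueness argument under the additional regularity assumption on the data, and the density argument reducing the hypothesis to $M_0^\gamma < \infty$ all carry over with only cosmetic changes.

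The main obstacle lies in controlling the nonlinear commutator terms arising from $\partial_t^k(\rho_0^\gamma/\tilde\eta_x^\gamma)_x$ when $\gamma < 2$: formal differentiation produces factors of $\rho_0^{\gamma-2}$, which are singular at the vacuum boundary. These must be absorbed by splitting each such factor as a product of two half-weights $\rho_0^{(\gamma-2)/2}$ and pairing each with one of the quadratic factors, so that the resulting products involve precisely the weights $\sqrt{d}^{1+1/(\gamma-1)-a}$ tracked by $E_\gamma$. The detailed weighted-interpolation book-keeping, combined with a Young's-inequality closure using an exponent strictly less than one (the analogue of $\alpha = (3-3a)/(4+3a) < 1$ from (\ref{a28})--(\ref{a30})), is the principal technical difficulty, but no new analytical ingredient beyond those already deployed for $\gamma = 2$ is required.
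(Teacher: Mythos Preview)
Your outline matches the paper's approach --- indeed, the paper itself offers no proof of Theorem~\ref{thm_main2} beyond the one-line assertion that ``the same procedure as we have detailed for the case that $\gamma=2$'' applies, so your sketch already contains more detail than the paper provides. One small correction is worth recording: because the momentum equation can be rewritten as $v_t + \gamma\,\eta_x^{-1}\bigl(\rho_0^{\gamma-1}/\eta_x^{\gamma-1}\bigr)_x = 0$ with $\rho_0^{\gamma-1}\sim d$ playing exactly the role that $\rho_0$ did when $\gamma=2$, the weights in the unweighted-Sobolev part of $E_\gamma$ are $d$ and $\sqrt{d}$ (so the embedding (\ref{w-embed}) is still invoked with $p=1$ or $2$, not $p=1/(\gamma-1)$), and no genuinely singular factor $\rho_0^{\gamma-2}$ appears in the commutators --- the extra work for small $\gamma$ is purely the longer cascade of time-differentiated estimates indexed by $a=0,\dots,a_0$.
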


\vspace{.1 in}

\noindent
{\bf Acknowledgments.}
SS was supported by the National Science Foundation under
grant DMS-0701056.

\end{document}